\newtheorem{thm}{Theorem}[section]
\newtheorem{lemma}[thm]{Lemma}
\newtheorem{prop}[thm]{Proposition}
\newcommand{\Exp}{\text{$\bf E$}}
\newcommand{\sgn}{\text{sgn}}
\newcommand{\reals}{{\sf I \! R}}
\newcommand{\complexs}{{\sf I \! \! C}}
\newcommand{\zbar}{\bar{z}}
\newcommand{\ds}{\displaystyle}
\newcommand{\ba}{\begin{array}}
\newcommand{\ea}{\end{array}}
\renewcommand{\Re}{\text{\rm Re}}
\renewcommand{\Im}{\text{\rm Im}}
\begin{document}

\begin{frontmatter}

\title{The complex zeros of random sums}
\runtitle{Zeros of Random Sums}

\begin{aug}
\author{\fnms{Robert J.}
	\snm{Vanderbei}\corref{}\ead[label=e1]{rvdb@princeton.edu}\thanksref{t1}\ead[label=e1]{rvdb@princeton.edu}}
\thankstext{t1}{Research supported by ONR through grant	N00014-13-1-0093 and N00014-16-1-2162}
\runauthor{Robert J. Vanderbei}
\affiliation{Princeton University}
\address{Dept. of Ops. Res. and Fin. Eng. \\
	Princeton University \\ 
	Princeton, NJ 08544 
}

\vspace*{0.3in}
{\em This paper is dedicated to the memory of Larry Shepp. }
\vspace*{0.3in}

\end{aug}

\begin{abstract}
    This paper extends earlier work on the distribution in the complex plane of
    the roots of random polynomials.  In this paper, the random polynomials are
    generalized to random finite sums of given ``basis'' functions.  The basis
    functions are assumed to be entire functions that are real-valued on the
    real line.  The
    coefficients are assumed to be independent identically distributed Normal
    $(0,1)$ random variables.  An explicit formula for the density function is
    given in terms of the set of basis functions.  We also consider some practical
    examples including Fourier series.  In some cases, we derive an explicit
    formula for the limiting density as the number of terms in the sum tends to
    infinity.
\end{abstract}

\begin{keyword}[class=MSC]
	\kwd[Primary ]{30C15}
	\kwd[; secondary ]{30B20, 26C10, 60B99}
\end{keyword}
\end{frontmatter}


\section{Introduction.}

	The problem of characterizing the distribution of the roots of random
	polynomials has a long history.  In 1943, Kac \cite{Kac43} studied the
	real roots of random polynomials with iid normal coefficients.  He
	obtained an explicit formula for the density function for the
	distribution of the real roots.

    Following the initial work of Kac, 
    a large body of research on
    zeros of random polynomials has appeared -- see \cite{BRS86} for a fairly
    complete account of the early work in this area
    including an extensive list of references.  
    Most of this early work focused on the real zeros; \cite{ET50}, 
    \cite{Ham56} and \cite{SS62} being a few notable exceptions.  
    The paper of Edelman and Kostlan 
    \cite{EK95} gives a very elegent geometric treatment of the problem. 

    In more recent years, the work has branched off in a number of directions.
    For example, in 1995, Larry Shepp and I derived an explicit formula for the
    distribution of the roots in the complex plane (see \cite{Van94d}) when the
    coefficients are assumed to be iid normal random variables. 
    A short time later, Ibragimov and Zeitouni \cite{IZ97} took a different
    approach and were able to rederive our results and also find limiting
    distributions as the degree $n$ tends to infinity under more general
    distributional assumptions.  See also \cite{KZ13} and \cite{KZ14}.
    
    Also in the late 1990's, it was pointed out that
    understanding deeper statistical properties of the random roots, such as
    $k$-point correlations among the roots, was both interesting mathematically
    and had important implications in physics (see, e.g., \cite{Pro96},
		    \cite{FH99} and \cite{SM09}).
    
    A number of papers have appeared that attempt to prove certain specific
    properties under increasingly general distributional assumptions.  For example, in
    2002, Dembo et al. \cite{DPSZ02} derived a formula for the probability that
    none of the roots are real (assuming $n$ is even, of course) in the case
    when the coefficients of the polynomial are iid but not necessarily normal.
    Other papers have continued to study real roots---see, e.g., \cite{Wil97}.
    Another property that has been actively studied in recent years is the fact that
    as $n$ gets large the complex roots tend to distribute themselves close to
    and uniformly about the unit circle in the complex planes--see, e.g., the
    papers by Shiffman and Zelditch \cite{SZ03}, Hughes and Nikeghbali
    \cite{HN08}, Ibragimov and Zaporozhets \cite{IZ13}, Pritsker and Yeager
    \cite{PY15} and Pritsker \cite{Pri17}.
    Also, Li and Wei \cite{LW09} have considered harmonic
    polynomials--polynomials in the complex variable $z$ and it's conjugate
    $\bar{z}$.

    Using a very different approach, Feldheim \cite{Fel12} has derived a result
    that with some work can be shown to be equivalent to the results presented
    herein.

    Recently, Tao and Vu \cite{TV14}, drawing on the close connection with
    random matrix theory, derived asymptotic formulas for the correlation 
    functions of the roots of random polynomials.  They specifically address the
    question of how many zeros are real.

    The results summarized above mostly establish certain
    properties of the roots under very general distributional assumptions.  The
    price paid for that generality is that most results only hold
    asymptotically as $n \rightarrow \infty$.  
    In contrast, this
    paper introduces a modest generalization to the core assumptions underlying
    the results in \cite{Van94d} and we show that analogous explicit formulas
    can still be derived for any value of $n$.
    Specifically, instead of considering polynomials, 
    $\sum_{j=0}^n \eta_j z^j$, we generalize the ``basis'' functions $z^j$ to be
    any set of entire functions, $f_j(z)$, that are real on the real line.  So, to that end, we
	let
	\[
		P_n(z) = \sum_{j=0}^{n} \eta_j f_j(z), \qquad z \in \complexs,
	\]
	where $n$ is a fixed integer, the $\eta_j$'s are independent identically
	distributed $N(0,1)$ random variables, and the functions $f_j$ are given
	entire functions that are real-valued on the real line.
	We derive an explicit formula for the
	expected number of zeros in any measurable subset $\Omega$ of the 
	complex plane $\complexs$.  
	The formula will be expressed in terms of the following functions:
		\[
		    \begin{array}{rclrclrcl}
			A_0 (z) &=& \ds \sum_{j=0}^{n} f_j(z)^{2}, & \quad
			B_0 (z) &=& \ds \sum_{j=0}^{n} |f_j(z)|^{2}, & \quad
			\\[3ex]
			A_1 (z) &=& \ds \sum_{j=0}^{n} f_j(z) f_j'(z), &
			B_1 (z) &=& \ds \sum_{j=0}^{n} \overline{f_j(z)} f_j'(z), 
			\\[3ex]
			A_2 (z) &=& \ds \sum_{j=0}^{n} f_j'(z)^{2}, & \quad
			B_2 (z) &=& \ds \sum_{j=0}^{n} |f_j'(z)|^2, 
		    \end{array}
		\]
		and
		\begin{equation} \label{24}
			D_0 (z) = \sqrt{ B_0(z)^2 - |A_0(z)|^2}.
		\end{equation}
		and, lastly, 
		\[
		    E_1(z) = \sqrt{A_2(z) A_0(z) - A_1(z)^2}.
		\]
	Notes: (1) As usual, an overbar denotes complex conjugation and 
	 primes denote differentiation with respect to $z$.
	 (2) The function $E_1$ will only be needed in places where the argument of
	 the square root is a positive real.  At such places, 
	 the square root is assumed to be a positive real.
	 (3) Throughout the paper, we follow the usual convention of denoting 
	 the real and imaginary parts
	 of a complex variable $z$ by $x$ and $y$, respectively, i.e., $z = x + iy$. 

	Let $\nu_n(\Omega)$ denote the (random) number of zeros of $P_n$ in a
	set $\Omega$ in the complex plane.  Our first theorem asserts that
	throughout most of the plane this random variable has a density with
	respect to Lebesgue measure on the plane:
	\begin{thm} \label{thm1}
		For each measurable set $\Omega \subset \{z \in \complexs \; | \; D_0(z) \ne 0 \}$,
		\begin{equation} \label{21}
			\Exp \nu_n(\Omega) = \int_\Omega h_n(x,y) dxdy ,
		\end{equation}
		where
		\[
			h_n(z)    = \frac{
				   B_2 D_0^2 -
				   B_0 (|B_1|^2 + |A_1|^2) +
				   (A_0 B_1 \overline{A}_1 + \overline{A_0 B_1} A_1)
				 }{
				   \pi D_0^3
				 } .
		\]
	\end{thm}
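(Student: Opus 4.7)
The plan is to apply the Kac--Rice formula to the random map $\Phi\colon(x,y)\mapsto(U(x,y),V(x,y))$ from $\reals^2$ to $\reals^2$, where $U = \Re P_n$ and $V = \Im P_n$. Because $P_n$ is entire and almost surely not identically zero, its zeros in $\complexs$ coincide with those of $\Phi$ and are isolated a.s. The Cauchy--Riemann equations for $P_n$ give $U_x = V_y$ and $V_x = -U_y$, so the Jacobian of $\Phi$ equals $U_x V_y - U_y V_x = U_x^2 + V_x^2 = |P_n'(z)|^2 \ge 0$, which is the key simplification that removes the absolute value. The Kac--Rice formula then reads
\[
  \Exp \nu_n(\Omega) = \int_\Omega \Exp\bigl[|P_n'(z)|^2 \bigm| P_n(z) = 0\bigr]\, p_{(U,V)}(0,0)\, dxdy,
\]
reducing the theorem to the computation of the joint Gaussian density of $(U,V)$ at the origin and of the conditional expectation of $U_x^2 + V_x^2$.

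Next I would set up the covariance calculation. Writing $f_j = u_j + iv_j$, one has $U = \sum_j \eta_j u_j$ and $V = \sum_j \eta_j v_j$, and applying Cauchy--Riemann to each $f_j$ gives $U_x = \sum_j \eta_j \Re f_j'$ and $V_x = \sum_j \eta_j \Im f_j'$. The four-vector $(U,V,U_x,V_x)$ is therefore centered Gaussian. Expanding each covariance sum $\sum_j g_j h_j$ via the identities $u_j = (f_j + \bar f_j)/2$ and $v_j = (f_j - \bar f_j)/(2i)$, and analogously for $f_j'$, expresses every entry of the covariance matrix as an explicit real or imaginary part of one of $A_0, A_1, A_2, B_0, B_1, B_2$. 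In particular $\det\mathrm{Cov}(U,V) = (B_0^2 - |A_0|^2)/4 = D_0^2/4$, so the hypothesis $D_0(z)\ne 0$ makes $(U,V)$ genuinely two-dimensional with density $p_{(U,V)}(0,0) = 1/(\pi D_0)$, which accounts for the leading $1/(\pi D_0)$ in $h_n$ (and promotes one of the three powers of $D_0$ in the stated denominator).

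To finish, I would apply the standard Gaussian conditioning formula: with $W_1=(U,V)^T$, $W_2=(U_x,V_x)^T$, and $\Sigma_{ij}$ the associated covariance blocks, $W_2$ given $W_1=0$ is centered Gaussian with covariance $\Sigma_{22}-\Sigma_{21}\Sigma_{11}^{-1}\Sigma_{12}$, and
\[
  \Exp\bigl[U_x^2+V_x^2 \bigm| U=V=0\bigr] = \mathrm{tr}\,\Sigma_{22} - \mathrm{tr}\bigl(\Sigma_{21}\Sigma_{11}^{-1}\Sigma_{12}\bigr),
\]
with $\mathrm{tr}\,\Sigma_{22} = \sum_j |f_j'|^2 = B_2$. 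The main obstacle is the final algebraic simplification: the $2\times 2$ trace $\mathrm{tr}(\Sigma_{21}\Sigma_{11}^{-1}\Sigma_{12})$ has a denominator $D_0^2/4$ coming from $\det\Sigma_{11}$ and a numerator built out of $\Re A_1$, $\Im A_1$, $\Re B_1$, $\Im B_1$, $\Re A_0$, $\Im A_0$, $B_0$. Careful grouping of complex-conjugate pairs should collapse it to $\bigl[B_0(|A_1|^2+|B_1|^2) - (A_0 B_1\overline{A_1} + \overline{A_0 B_1}\,A_1)\bigr]/D_0^2$. Combining this with $B_2$, placing everything over $D_0^2$, and multiplying by $1/(\pi D_0)$ produces exactly the formula for $h_n$ stated in the theorem. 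Apart from this bookkeeping, the only additional remark needed is a brief verification that the regularity hypotheses of Kac--Rice hold for this Gaussian family, which is immediate since $P_n$ has $C^\infty$ sample paths and the nondegeneracy condition is precisely $D_0(z)\ne 0$.
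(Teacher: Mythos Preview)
Your proposal is correct and follows a genuinely different route from the paper. The paper does \emph{not} use Kac--Rice; instead it starts from the argument principle, writes $\nu_n(\Omega)=\frac{1}{2\pi i}\oint_{\partial\Omega}P_n'/P_n\,dz$, takes expectations to obtain a contour integral of $F(z)=\Exp[P_n'(z)/P_n(z)]$, computes $F$ explicitly via a Cholesky factorization of the covariance of $(\Re P_n,\Im P_n,\Re P_n',\Im P_n')$ (Lemma~\ref{lemma1}), and then applies Stokes' theorem so that $h_n=\frac{1}{\pi}\partial F/\partial\bar z$. The resulting algebraic simplification occupies an entire appendix.

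Your Kac--Rice approach is more direct and the algebra is markedly shorter: the trace $\mathrm{tr}(\Sigma_{21}\Sigma_{11}^{-1}\Sigma_{12})$ collapses in a few lines (grouping $c^Ta+i\,d^Ta=\tfrac12(A_1+B_1)$, $c^Tb+i\,d^Tb=\tfrac{i}{2}(B_1-A_1)$ does the job), whereas the paper's computation of $\partial F/\partial\bar z$ requires several pages of manipulations. On the other hand, the paper's detour through $F$ buys something your approach does not: the same function $F$ has a jump discontinuity across the real axis, and that jump immediately yields the singular density $g_n$ of Theorem~\ref{thm2}. Thus the argument-principle route treats the absolutely continuous part and the real-axis mass in one unified framework, while your Kac--Rice argument handles only the region $\{D_0\ne 0\}$ and would need a separate one-dimensional Kac--Rice computation to recover Theorem~\ref{thm2}.
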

	It is easy to see that the density function $h_n$ is real
	valued.  It is less obvious that it is nonnegative.  We leave this
	sanity check to the reader.
	As we see from the above theorem, places where $D_0$ vanishes are special and
	must be studied separately.   The real axis is one such place:
	\begin{thm} \label{thm2}
		On the real line, the function $D_0$ vanishes.  For each
		measurable set $\Omega \subset \reals$,
		\begin{equation} \label{21}
			\Exp \nu_n(\Omega) = \int_{\Omega} g_n(x) dx ,
		\end{equation}
		where
		\[
			g_n(x) = \frac{
				   E_1(x)
				 }{
				   \pi B_0
				 } .
		\]
	\end{thm}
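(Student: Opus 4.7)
The plan is to recognize that on $\reals$ the random function $P_n$ is a real-valued, mean-zero, real-analytic Gaussian process (since each $f_j$ is real on $\reals$ and the $\eta_j$ are real standard normals; real-analyticity follows from the $f_j$ being entire). This puts us squarely in the setting of the classical Kac--Rice formula, so
\[
    \Exp \nu_n(\Omega) \;=\; \int_\Omega \Exp\bigl[\,|P_n'(x)|\,\big|\,P_n(x)=0\,\bigr]\,p_{P_n(x)}(0)\,dx.
\]
Note also that the asserted vanishing of $D_0$ on $\reals$ is immediate: on $\reals$ we have $f_j(x)^2=|f_j(x)|^2$, so $A_0(x)=B_0(x)$ and $D_0(x)^2=B_0^2-A_0^2=0$.

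Next I would identify the joint Gaussian law of $(P_n(x),P_n'(x))$ for $x\in\reals$. Since the $\eta_j$ are i.i.d.\ standard normal, this pair has mean zero and covariance matrix
\[
    \Sigma(x) \;=\; \begin{pmatrix} A_0(x) & A_1(x) \\ A_1(x) & A_2(x) \end{pmatrix},
\]
whose determinant is precisely $E_1(x)^2$. The marginal density of $P_n(x)$ at $0$ is $(2\pi A_0)^{-1/2}$, and the standard conditional-Gaussian formula gives that $P_n'(x)$, conditioned on $P_n(x)=0$, is $N\!\bigl(0,\,E_1^2/A_0\bigr)$. Using $\Exp|Z|=\sqrt{2\sigma^2/\pi}$ for $Z\sim N(0,\sigma^2)$,
\[
    \Exp\bigl[\,|P_n'(x)|\,\big|\,P_n(x)=0\,\bigr] \;=\; \sqrt{\tfrac{2}{\pi}}\;\frac{E_1(x)}{\sqrt{A_0(x)}}.
\]

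Multiplying the two factors and using $A_0=B_0$ on $\reals$ telescopes the constants:
\[
    g_n(x) \;=\; \frac{1}{\sqrt{2\pi A_0(x)}}\cdot\sqrt{\tfrac{2}{\pi}}\,\frac{E_1(x)}{\sqrt{A_0(x)}} \;=\; \frac{E_1(x)}{\pi A_0(x)} \;=\; \frac{E_1(x)}{\pi B_0(x)},
\]
which is the claimed density. The main obstacle I anticipate is not this calculation but the justification of Kac--Rice: one must check that $\Sigma(x)$ is nondegenerate, so that the conditioning on $P_n(x)=0$ is meaningful. Nondegeneracy at $x$ is equivalent to the two vectors $\bigl(f_0(x),\ldots,f_n(x)\bigr)$ and $\bigl(f_0'(x),\ldots,f_n'(x)\bigr)$ being linearly independent in $\reals^{n+1}$. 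The function $\det\Sigma(x)=A_0A_2-A_1^2$ is real-analytic on $\reals$, so it either vanishes identically (the degenerate case in which $P_n$ is almost surely proportional to a fixed function and the theorem becomes vacuous) or vanishes on a discrete set, and the Kac--Rice conclusion is unaffected by a discrete exceptional set. The remaining regularity hypotheses (finite variances, $C^1$ sample paths, nonatomic law) are automatic from joint Gaussianity and entirety of the $f_j$.
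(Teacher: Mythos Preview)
Your argument is correct, but it takes a genuinely different route from the paper's. The paper derives Theorem~\ref{thm2} as a byproduct of the complex-analytic machinery built for Theorem~\ref{thm1}: starting from the argument-principle representation $\Exp\nu_n(\Omega)=\frac{1}{2\pi i}\int_{\partial\Omega}F(z)\,dz$ with $F(z)=\Exp[P_n'(z)/P_n(z)]$ (Proposition~\ref{prop1}), the paper shows in Lemma~\ref{lemma1b} that $F$ has a jump discontinuity across the real axis, with $F(x\pm)=\bigl(B_1(x)\mp iE_1(x)\bigr)/B_0(x)$. Shrinking a thin rectangle $[a,b]\times[-\varepsilon,\varepsilon]$ to the segment $[a,b]$ then gives $g_n(x)=\frac{1}{2\pi i}(F(x-)-F(x+))=E_1(x)/(\pi B_0(x))$.

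Your approach bypasses all of this and applies the classical Kac--Rice formula directly to the real Gaussian process $x\mapsto P_n(x)$. This is more elementary and self-contained for the real-axis statement, and is essentially Kac's original 1943 argument specialized to the basis $\{f_j\}$; it requires no complex analysis and no computation of $F$ or its boundary behavior. What the paper's route buys, on the other hand, is unification: the real-axis density emerges as the singular (jump) part of the same contour integral whose absolutely continuous part gives the planar density $h_n$ of Theorem~\ref{thm1}, so both theorems flow from one formula. Your nondegeneracy discussion is adequate; note that when $E_1\equiv 0$ the claimed density is identically zero, so the formula is consistent even in the degenerate case.
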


	In the case where the $f_j(z)$'s are just powers of $z$, our results
	reduce to those given in \cite{Van94d}.
	The proof here
	parallels the analogous proof given in \cite{Van94d} but there are a few
	differences, the most important one being the fact that, in
	general, the function $B_1$ is not real-valued like it was in
	\cite{Van94d}.  It
	seems that this explicit formula has not been derived before
	and, as shown in later sections, there are interesting
	new examples that can now be solved.

	While the definition of $h_n$ in Theorem \ref{thm1} looks rather
	complicated, it is nevertheless amenable both to computation 
	and, with some choices of the
	functions $f_j$, it is amenable to asymptotic analysis as well.  

    In the following section, we derive the explicit formulas given above
    for the intensity 
    functions $h_n$ and $g_n$.  
    Then, in Section
    \ref{sec5}, we look at some specific examples and finally, in Section
    \ref{sec6}, we offer some speculation and suggest future research
    directions.

\section{The Intensity Functions $h_n$ and $g_n$.} \label{sec2}

    This section is devoted to the proof of Theorems \ref{thm1} and \ref{thm2}.  
    We begin with the following proposition.

    \begin{prop} \label{prop1}
	For each region $\Omega \in \complexs$ whose boundary intersects the
	set 
	$\{z \; | \; D_0(z) = 0 \}$ 
	at most only finitely many times,
	\begin{equation} \label{26}
		\Exp \nu_n(\Omega) = \frac{1}{2 \pi i}
				\int_{\partial \Omega} F(z) dz ,
	\end{equation}
	where 
	\begin{equation} \label{27}
		F = \frac{
		      B_1 D_0 + B_0 B_1 - \bar{A}_0 A_1
		    }{
		      B_0 D_0 + B_0^2   - \bar{A}_0 A_0
		    } .
	\end{equation}
    \end{prop}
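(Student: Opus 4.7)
The plan is to apply the argument principle pointwise and then exchange the expectation with the resulting contour integral. Almost surely $P_n$ is a non-zero entire function, and since $P_n(z)$ has a non-degenerate Gaussian distribution at each $z$ with $B_0(z) > 0$, a standard Fubini-type argument shows that with probability one $P_n$ has no zeros on $\partial\Omega$ (the hypothesis ensures $B_0 > 0$ on all but finitely many points of $\partial\Omega$). The argument principle then gives
\[
    \nu_n(\Omega) = \frac{1}{2\pi i}\int_{\partial\Omega}\frac{P_n'(z)}{P_n(z)}\,dz,
\]
and a further Fubini step reduces the proposition to computing
\[
    F(z) := \Exp\!\left[\frac{P_n'(z)}{P_n(z)}\right]
\]
at a fixed $z$ with $D_0(z) \ne 0$.

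Fix such a $z$ and write $W = P_n(z) = U + iV$ and $W' = P_n'(z) = U' + iV'$. Since $U, V, U', V'$ are real linear combinations of the iid $N(0,1)$ random variables $\eta_j$, the quadruple $(U, V, U', V')$ is a centered Gaussian in $\reals^4$. Routine identities translate its covariance entries into the invariants of the problem; in particular, the $2{\times}2$ covariance of $(U, V)$ has determinant $D_0(z)^2/4$, so the hypothesis $D_0 \ne 0$ is exactly non-degeneracy of $(U, V)$. Gaussian regression then yields $W' = \alpha U + \beta V + Z$ with $Z$ centered and independent of $(U, V)$, whence
\[
    F(z) = \Exp\!\left[\frac{\alpha U + \beta V}{W}\right] = \gamma + \delta\,\Exp\!\left[\frac{\bar W}{W}\right]
\]
after decomposing $\alpha U + \beta V = \gamma W + \delta \bar W$. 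Unwinding the covariance identities produces explicit expressions for $\gamma$ and $\delta$ as rational functions of $A_0, A_1, B_0, B_1, D_0$.

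The remaining analytic step is the evaluation of $\Exp[\bar W/W]$ for a general non-degenerate centered two-dimensional Gaussian $W = U + iV$. Writing $W = pX + qY$ with $X, Y$ iid standard normals and $p, q \in \complexs$ chosen so that $|p|^2 + |q|^2 = B_0$ and $p^2 + q^2 = A_0$, the quotient $\bar W / W$ depends only on the argument of $X + iY$, and the angular integral converts to a contour integral over $|\zeta| = 1$. Its residues, simplified via the identities $|p \mp iq|^2 = B_0 \pm D_0$, collapse to the compact formula
\[
    \Exp\!\left[\frac{\bar W}{W}\right] = \frac{\bar A_0}{B_0 + D_0}.
\]
Substituting this together with the explicit $\gamma$ and $\delta$ and using $B_0^2 - |A_0|^2 = D_0^2$ then reproduces formula~(\ref{27}). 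The main obstacle is this residue computation and the subsequent algebraic reorganization; the cancellations that bring $F$ into the succinct rational form asserted in the proposition are the one nontrivial part of the argument.
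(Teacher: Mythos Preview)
Your argument is correct and reaches the same formula, but the route differs meaningfully from the paper's.  The paper works entirely in real coordinates: it writes $(\xi_1,\xi_2,\xi_3,\xi_4)=(\Re P_n,\Im P_n,\Re P_n',\Im P_n')$, Cholesky-factors the $4\times4$ covariance to express these in terms of four iid normals $\zeta_1,\dots,\zeta_4$, reduces $\Exp[P_n'/P_n]$ to a two-variable expectation of the form $\Exp[(\alpha\zeta_1+\beta\zeta_2)/(\gamma\zeta_1+\delta\zeta_2)]$, evaluates the auxiliary function $f(w)=\Exp[\zeta_1/(w\zeta_1+\zeta_2)]=1/(w\pm i)$ by a trigonometric integral, and then pushes the Cholesky entries back through the identities relating $a^Ta,\,a^Tb,\dots$ to $A_k,B_k$.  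Your approach instead stays complex throughout: regressing $W'$ onto the pair $(W,\bar W)$ immediately gives $W'=\gamma W+\delta\bar W+Z$ with $\gamma=(B_0B_1-\bar A_0A_1)/D_0^2$ and $\delta=(B_0A_1-A_0B_1)/D_0^2$ read straight off the two linear equations $\Exp[W'\bar W]=B_1$, $\Exp[W'W]=A_1$, and the only analytic input is the single identity $\Exp[\bar W/W]=\bar A_0/(B_0+D_0)$.  Substituting these and using $D_0^2=B_0^2-|A_0|^2$ collapses directly to formula~(\ref{27}).  Your path bypasses the Cholesky bookkeeping and the intermediate translation between real inner products and the complex invariants, at the cost of the one residue computation; the paper's path is more elementary at each step but carries more algebra.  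Both identify the same ``hard'' point: evaluating the ratio expectation for a two-dimensional Gaussian and recognizing the resulting simplification.
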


    \begin{proof}
	The argument principle (see, e.g., \cite{Ahl66}, p. 151) gives an
	explicit formula for the random variable $\nu_n(\Omega)$, namely
	\begin{equation} \label{7}
		\nu_n(\Omega) = \frac{1}{2 \pi i} 
			   \int_{\partial \Omega} \frac{P_n'(z)}{P_n(z)} dz.
	\end{equation}

	Taking expectations in \eqref{7} and then
	interchanging expectation and contour integration (the justification
	of which is tedious but doable), we get
	\begin{equation} \label{25}
		\Exp \nu_n(\Omega) = \frac{1}{2 \pi i} 
			            \int_{\partial \Omega} 
				    \Exp \frac{P_n'(z)}{P_n(z)} 
				    dz .
	\end{equation}
	The following Lemma shows that, away from the set
	$\{z \; | \; D_0(z) = 0 \}$, the function
	\begin{equation} \label{30}
		F(z) = \Exp \frac{P_n'(z)}{P_n(z)}
	\end{equation}
	simplifies to the expression given in \eqref{27} and, since we've assumed
	that $\partial \Omega$ intersects this set at only finitely many
	points, this finishes the proof.
    \end{proof}

    \begin{lemma} \label{lemma1}
        Let $F$ denote the function defined by \eqref{30}.
	For $z \not\in \{z \; | \; D_0(z) = 0 \}$,
        \[
	    F = \frac{
	          B_1 D_0 + B_0 B_1 - \bar{A}_0 A_1
	        }{
	          B_0 D_0 + B_0^2   - \bar{A}_0 A_0
	        } .
        \]
    \end{lemma}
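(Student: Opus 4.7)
The plan is to exploit the joint complex Gaussianity of the pair $(U,V) := (P_n(z), P_n'(z))$. Because $P_n$ and $P_n'$ are linear combinations of the iid real normals $\eta_j$, the vector $(U,V)$ (viewed in $\reals^4$) is centered Gaussian, and its law is completely determined by the four second moments
\[ \Exp U^2 = A_0, \quad \Exp |U|^2 = B_0, \quad \Exp UV = A_1, \quad \Exp \bar U V = B_1 \]
(the $V$-only moments $A_2, B_2$ do not enter). The covariance matrix of $U = U_1 + iU_2 \in \reals^2$ has determinant $\tfrac14(B_0^2 - |A_0|^2) = \tfrac14 D_0^2$, so the hypothesis $D_0(z)\ne 0$ is precisely the non-degeneracy of $U$; this makes $1/U$ locally integrable on $\reals^2$ (it behaves like $1/|x|$, which is integrable in two dimensions), and therefore $F(z) = \Exp[V/U]$ is well defined.

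The first step is a Gaussian orthogonal regression of $V$ onto the complex span of $\{U, \bar U\}$: write $V = \alpha U + \beta \bar U + W$ and choose $\alpha, \beta \in \complexs$ so that $\Exp[WU] = \Exp[W\bar U] = 0$. The resulting normal equations
\[ \alpha A_0 + \beta B_0 = A_1, \qquad \alpha B_0 + \beta \bar A_0 = B_1 \]
have determinant $-D_0^2 \ne 0$ and hence a unique solution. Joint Gaussianity upgrades uncorrelatedness to independence, so $W$ is independent of $U$, has mean zero, and consequently $\Exp[W/U] = \Exp\bigl[\Exp[W\mid U]/U\bigr] = 0$. Thus
\[ F(z) \;=\; \alpha \;+\; \beta\,\Exp\!\left[\frac{\bar U}{U}\right]. \]

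The main obstacle — and the only place where genuine computation is needed — is the identity
\[ \Exp\!\left[\frac{\bar U}{U}\right] \;=\; \frac{\bar A_0}{B_0 + D_0}. \]
I would prove it by diagonalising the $\reals^2$ covariance of $U$: there is an angle $\psi$ (with $A_0 = e^{2i\psi}|A_0|$) and eigenvalues $\lambda_\pm \ge 0$ satisfying $\lambda_+ + \lambda_- = B_0$, $\lambda_+ - \lambda_- = |A_0|$, and $2\sqrt{\lambda_+\lambda_-} = D_0$, such that $U \stackrel{d}{=} e^{i\psi}\bigl(\sqrt{\lambda_+}\,\zeta_1 + i\sqrt{\lambda_-}\,\zeta_2\bigr)$ with $\zeta_1, \zeta_2$ iid standard normals. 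Using $\bar U/U = e^{-2i\arg U}$, a short polar-coordinate integral gives
\[ \Exp[e^{-2i\arg U}] \;=\; e^{-2i\psi}\,\frac{\sqrt{\lambda_+} - \sqrt{\lambda_-}}{\sqrt{\lambda_+} + \sqrt{\lambda_-}}, \]
which equals $\bar A_0/(B_0+D_0)$ because $B_0 + D_0 = (\sqrt{\lambda_+}+\sqrt{\lambda_-})^2$.

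Finally, solving the normal equations gives $\alpha = (B_0 B_1 - \bar A_0 A_1)/D_0^2$ and $\beta = (B_0 A_1 - A_0 B_1)/D_0^2$. Substituting these and the value of $\Exp[\bar U/U]$ into $F = \alpha + \beta\,\Exp[\bar U/U]$ and clearing denominators, the numerator telescopes (using $B_0^2 - |A_0|^2 = D_0^2$) to $D_0(B_1 D_0 + B_0 B_1 - \bar A_0 A_1)$, while the denominator becomes $D_0(B_0 D_0 + B_0^2 - \bar A_0 A_0)$; cancelling the common factor $D_0$ produces exactly the expression stated in the lemma.
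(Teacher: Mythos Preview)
Your proof is correct and takes a genuinely different route from the paper's. The paper works in real coordinates: it writes $(\Re P_n,\Im P_n,\Re P_n',\Im P_n')$ as a Gaussian vector in $\reals^4$, computes the lower-triangular Cholesky factor $L$ of its $4\times4$ covariance matrix, and thereby reduces $\Exp[P_n'/P_n]$ to evaluating the scalar function $f(w)=\Exp[\zeta_1/(w\zeta_1+\zeta_2)]$ (found by a trigonometric integral to be $1/(w\pm i)$), after which a substantial amount of algebra is needed to translate the entries $l_{ij}$ back into the $A_k,B_k$. Your argument exploits the complex structure from the start: the regression $V=\alpha U+\beta\bar U+W$ is the complex-linear analogue of the Cholesky projection of $(\xi_3,\xi_4)$ onto the span of $(\xi_1,\xi_2)$, and it collapses the problem to computing the single quantity $\Exp[\bar U/U]$, which you obtain by diagonalising the $2\times2$ real covariance of $U$. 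The payoff is that your normal equations deliver $\alpha,\beta$ directly in terms of $A_0,B_0,A_1,B_1$, so the final assembly is a two-line telescoping identity rather than the longer Cholesky bookkeeping in the paper; conversely, the paper's approach is entirely elementary in that it never needs the observation that orthogonality to \emph{both} $U$ and $\bar U$ is exactly what forces independence of $W$ from $U$ in the jointly real-Gaussian setting.
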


    \begin{proof}
	Note that
	$P_n(z)$ and $P_n'(z)$ are complex Gaussian random variables.  It is
	convenient to work with their real and imaginary parts,
	\begin{eqnarray} 
		P_n(z)  & = & \xi_1 + i \xi_2 ,	\nonumber \\
		P_n'(z) & = & \xi_3 + i \xi_4 ,	\nonumber 
	\end{eqnarray} 
	which are just linear combinations of the original standard normal
	random variables:
	\[
	    \ba{rclcrcl}
		\xi_1 & = & \ds \sum_{j=0}^{n} a_j \eta_j , & \quad
		\xi_2 & = & \ds \sum_{j=0}^{n} b_j \eta_j , \\
		\xi_3 & = & \ds \sum_{j=0}^{n} c_j \eta_j , & \quad
		\xi_4 & = & \ds \sum_{j=0}^{n} d_j \eta_j .
	    \ea
	\]
	The coefficients in these linear combinations are given by
	\begin{equation} \label{11}
	    \ba{rclll}
		a_j & = & \Re(f_j(z)) & = & \dfrac{f_j(z) + \overline{f_j(z)}}{2} , \\[0.1in]
		b_j & = & \Im(f_j(z)) & = & \dfrac{f_j(z) - \overline{f_j(z)}}{2i} , \\[0.1in]
		c_j & = & \Re(f_j'(z)) & = & \dfrac{f_j'(z) + \overline{f_j'(z)}}{2} , \\[0.1in]
		d_j & = & \Im(f_j'(z)) & = & \dfrac{f_j'(z) - \overline{f_j'(z)}}{2i} .
	    \ea
	\end{equation} 
	Put
	$
		\xi = [
			  \ba{cccc}
			      \xi_1  &
			      \xi_2  &
			      \xi_3  &
			      \xi_4
			  \ea
		      ]^T .
	$
	The covariance among these four Gaussian random variables is easy to
	compute:
	\begin{equation} \label{13}
		\text{Cov}(\xi) = \Exp \xi \xi^T
				= \left[
				      \ba{cccc}
					  a^T a & a^T b & a^T c & a^T d \\
					  b^T a & b^T b & b^T c & b^T d \\
					  c^T a & c^T b & c^T c & c^T d \\
					  d^T a & d^T b & d^T c & d^T d 
				      \ea
				  \right]
	\end{equation}
	We now represent these four correlated Gaussian
	random variables in terms of four independent standard normals.  
	To this end, we seek a lower triangular matrix 
	$L = [ \ba{c} \l_{ij} \ea ]$ 
	such that the vector $\xi$ is equal in
	distribution to $L \zeta$, where 
	$
		\zeta = [
			  \ba{cccc}
			      \zeta_1 &
			      \zeta_2 &
			      \zeta_3 &
			      \zeta_4
			  \ea
		        ]^T 
	$
	is a vector of four independent standard normal random variables.  
	The following simple
	calculation shows that $L$ is the Cholesky factor for the
	covariance matrix:
	\begin{equation} \label{14}
		\text{Cov}(\xi) = \Exp \xi \xi^T = \Exp L \zeta \zeta^T L^T
				= L L^T .
	\end{equation} 
	Now, since $\xi \stackrel{\text{D}}{=} L \zeta$ and $L$ is
	lower triangular
	(the symbol $\stackrel{\text{D}}{=}$ denotes equality in
	distribution), we get that
	\begin{eqnarray*}
		\frac{P_n'(z)}{P_n(z)} 
			& = & \frac{\xi_3 + i \xi_4}{\xi_1 + i \xi_2} \\
			& \stackrel{\text{D}}{=} & 
			      \frac{
				  (l_{31} + i l_{41}) \zeta_1 +
				  (l_{32} + i l_{42}) \zeta_2 +
				  (l_{33} + i l_{43}) \zeta_3 +
				            i l_{44}  \zeta_4 
			      }{
				  (l_{11} + i l_{21}) \zeta_1 +
				            i l_{22}  \zeta_2 
			      } .
	\end{eqnarray*}
	Hence, exploiting the independence of the $\zeta_i$'s, we see that
	\begin{equation} \label{12}
		F(z) 
		=
		\Exp
		\frac{P_n'(z)}{P_n(z)} 
		=
		\Exp
		      \frac{
			  \alpha \zeta_1 + \beta \zeta_2
		      }{
			  \gamma \zeta_1 + \delta \zeta_2
		      } ,
	\end{equation} 
	where
	\[
	    \ba{rclcrcl}
		\alpha & = & l_{31} + i l_{41} & \quad &
		\beta  & = & l_{32} + i l_{42} \\
		\gamma & = & l_{11} + i l_{21} & \quad &
		\delta & = &          i l_{22} .
	    \ea
	\]
	Splitting up the numerator in \eqref{12} and exploiting the
	exchangeability of $\zeta_1$ and $\zeta_2$, we can rewrite the 
	expectation as follows:
	\[
		F(z)
		=
		\frac{\alpha}{\delta} \; f ( \gamma / \delta ) +
		\frac{\beta }{\gamma} \; f ( \delta / \gamma ) ,
	\]
	where $f$ is a complex-valued function defined on 
	$\complexs \setminus \reals$ by
	\[
		f(w) = \Exp \frac{\zeta_1}{w \zeta_1 + \zeta_2} .
	\]
	The expectation appearing in the definition of $f$ can be explicitly
	computed.  Indeed,
	\begin{eqnarray*}
		f(w) & = & \frac{1}{2 \pi}
		           \int_0^{2 \pi} \int_0^{\infty}
		           \frac{ 
			       \rho \cos \theta 
		           }{
			       w \rho \cos \theta + \rho \sin \theta
		           }
		           e^{ - \rho^2 /2 } \rho d \rho d \theta \\
		     & = & \frac{1}{2 \pi}
			   \int_0^{2 \pi}
			   \frac{d \theta}{w + \tan \theta} 
	\end{eqnarray*}
	and this last integral can be computed explicitly giving us
	\[
		f(w) = \left\{
			   \ba{ll}
				\dfrac{1}{w+i}, & \quad \Im(w) > 0,\\[2ex]
				\dfrac{1}{w-i}, & \quad \Im(w) < 0. 
			   \ea
		       \right.
	\]
	Recalling the definition of $\delta$ and $\gamma$, we see that
	\[
		\frac{\gamma}{\delta} 
			= \frac{l_{21}}{l_{22}} - i \frac{l_{11}}{l_{22}} .
	\]
	In general, $l_{11}$ and $l_{22}$ are just nonnegative.  However, it 
	is not
	hard to show that they are both strictly positive whenever $z$ has a
	nonzero imaginary part.  Hence, $\gamma / \delta$
	lies in the lower half-plane,
	$\delta / \gamma$ lies in the upper half-plane, and 
	\begin{eqnarray}
		F(z)
		& = &
		\frac{\alpha}{\delta}
		\frac{1}{\frac{\gamma}{\delta}-i}
		+
		\frac{\beta}{\gamma}
		\frac{1}{\frac{\delta}{\gamma}+i}          \label{15} \\
		& = &
		\frac{i \alpha + \beta}{i \gamma + \delta} \nonumber \\
		& = &
		\frac{
			l_{32}-l_{41} + i (l_{31}+l_{42})
		}{
			-l_{21} + i (l_{11}+l_{22})
		} .					   \nonumber
	\end{eqnarray}
	At this point, we need explicit formulas for the elements of the
	Cholesky factor $L$.  From \eqref{13} and \eqref{14}, we see that
	\[
	    \begin{array}{rclrcl}
		a^T a & = & l_{11}^2      \\[2ex]
		b^T a & = & l_{21} l_{11} & \qquad \qquad
		b^T b & = & l_{21}^2 + l_{22}^2           \\[2ex]
		c^T a & = & l_{31} l_{11} &
		c^T b & = & l_{31} l_{21} + l_{32} l_{22} \\[2ex]
		d^T a & = & l_{41} l_{11} &
		d^T b & = & l_{41} l_{21} + l_{42} l_{22} .
	    \end{array}
	\]
	Solving these equations in succession, we get
	\[
	    \begin{array}{rclrcl}
		l_{11} & = & \dfrac{a^T a}{\sqrt{a^T a}} \\[0.2in]
		l_{21} & = & \dfrac{b^T a}{\sqrt{a^T a}} & \qquad \qquad
		l_{22} & = & \dfrac{
				(a^T a)(b^T b) - (b^T a)^2
			     }{
				\sqrt{a^T a} R
			     } \\[0.2in]
		l_{31} & = & \dfrac{c^T a}{\sqrt{a^T a}}  &
		l_{32} & = & \dfrac{
				(a^T a)(c^T b) - (c^T a)(b^T a)
			     }{
				\sqrt{a^T a} R
			     } \\[0.2in]
		l_{41} & = & \dfrac{d^T a}{\sqrt{a^T a}}  &
		l_{42} & = & \dfrac{
				(a^T a)(d^T b) - (d^T a)(b^T a)
			     }{
				\sqrt{a^T a} R
			     } 
	    \end{array}
	\]
	where
	\[
		R = \sqrt{ (a^T a)(b^T b) - (b^T a)^2 } .
	\]
	Substituting these expressions into \eqref{15} and simplifying, 
	we see that
	\begin{equation} \label{16}
		F(z)
		= 
		\frac{
		    -d^T a + ic^T a - i
		    \left( 
			a^T a (-d^T b + i c^T b) - (-d^T a + i c^T a) b^T a 
		    \right) 
		    / R
		}{
		    -b^T a + i a^T a + i R
		} .
	\end{equation} 
	Recalling the definitions of $a_j$, $b_j$, $c_j$, and $d_j$ given in
	\eqref{11}, it is easy to check that the following identities hold:
	\[
	    \begin{array}{rclrcl}
		a^T a & = & \phantom{+}\frac{1}{4} ( A_0 + 2 B_0 + \bar{A}_0 ) , \\[2ex]
		b^T a & = & -\frac{i}{4} ( A_0 - \bar{A}_0 )         , & \qquad 
		b^T b & = & -\frac{1}{4} ( A_0 - 2 B_0 + \bar{A}_0 ) , \\[2ex]
		c^T a & = & \phantom{+}\frac{1}{4} ( A_1 + B_1 + \bar{B}_1 + \bar{A}_1 ) , &
		c^T b & = & -\frac{i}{4} ( A_1 - B_1 + \bar{B}_1 - \bar{A}_1 )         , \\[2ex]
		d^T a & = & -\frac{i}{4} ( A_1 + B_1 - \bar{B}_1 - \bar{A}_1 )         , &
		d^T b & = & -\frac{1}{4} ( A_1 - B_1 - \bar{B}_1 + \bar{A}_1 ) . 
	    \end{array}
	\]
	Plugging these expressions into \eqref{16} and simplifying, we get that
	\begin{equation} \label{17}
		F(z)
		= 
		\frac{
		    A_1 + B_1 +
		    ( A_0 B_1 + B_0 B_1 - A_1 B_0 - \bar{A}_0 A_1 )
		    / D_0
		}{
		    A_0 + B_0 + D_0
		} ,
	\end{equation} 
	where $D_0$ is as given in \eqref{24}.
	It turns out that further simplification occurs if we make the
	denominator real by the usual technique of
	multiplying and dividing by its complex conjugate.  We leave out the
	algebraic details except to mention that a factor of 
	$A_0 + 2B_0 + \bar{A}_0$ cancels out from the numerator and
	denominator leaving us with
	\begin{equation} \label{19}
		F(z)
		= 
		\frac{
		    B_1 D_0 + B_0 B_1 - \bar{A}_0 A_1
		}{
		    D_0 ( B_0 + D_0 )
		} ,
	\end{equation} 
	or, expanding out $D_0^2$,
	\begin{equation} \label{20}
		F(z)
		= 
		\frac{
		    B_1 D_0 + B_0 B_1 - \bar{A}_0 A_1
		}{
		    B_0 D_0 + B_0^2   - \bar{A}_0 A_0
		} .
	\end{equation} 
    \end{proof}

    \begin{lemma} \label{lemma1b}
        On the real axis, $F$ has a jump discontinuity.  
	Indeed, for each $a \in \reals$,
        \[
	    \lim_{z \rightarrow a: ~ \Im(z) > 0} F
	    =
	    \frac{B_1(a) - i \; E_1(a)}{B_0(a)}
        \]
        and
        \[
	    \lim_{z \rightarrow a: ~ \Im(z) < 0} F
	    =
	    \frac{B_1(a) + i \; E_1(a)}{B_0(a)} .
        \]
    \end{lemma}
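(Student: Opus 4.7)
The plan is to analyze $F$ as $z\to a\in\reals$ via the formula $F = (B_1 D_0 + B_0 B_1 - \bar A_0 A_1)/(D_0(B_0+D_0))$ from equation \eqref{19}. On the real line, $f_j$ is real, which forces $A_0=\bar A_0 = B_0$ and $A_1=B_1$, so both numerator and denominator vanish. I would therefore set $z = a + i\epsilon$ (considering $\epsilon\to 0^+$ and $\epsilon\to 0^-$ separately) and expand every quantity to the first nontrivial order in $\epsilon$.

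The key expansions I would carry out, using $\overline{f_j(z)} = f_j(\bar z)$ and the Taylor series of $f_j$ at $a$, are:
\begin{align*}
A_0(a+i\epsilon) &= A_0(a) + 2i\epsilon A_1(a) + O(\epsilon^2),\\
\bar A_0(a+i\epsilon) &= A_0(a) - 2i\epsilon A_1(a) + O(\epsilon^2),\\
B_0(a+i\epsilon) &= A_0(a) + O(\epsilon^2),\\
A_1(a+i\epsilon) &= A_1(a) + i\epsilon(A_2(a) + S(a)) + O(\epsilon^2),\\
B_1(a+i\epsilon) &= A_1(a) + i\epsilon(S(a) - A_2(a)) + O(\epsilon^2),
\end{align*}
where $S(a):=\sum_j f_j(a)f_j''(a)$. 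Multiplying out $A_0\bar A_0$ and $B_0^2$, the $S(a)$ pieces cancel and one obtains the crucial leading-order identity
\[
D_0(z)^2 = B_0^2 - |A_0|^2 = 4\epsilon^2\bigl(A_0(a)A_2(a)-A_1(a)^2\bigr) + O(\epsilon^3) = 4\epsilon^2 E_1(a)^2 + O(\epsilon^3),
\]
so that $D_0(z) = 2|\epsilon|E_1(a) + O(\epsilon^2)$, using the convention that $D_0\ge 0$.

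Next I would compute $B_0 B_1 - \bar A_0 A_1$. The $O(\epsilon^0)$ terms cancel by the real-line equalities, and the $S(a)$ contributions again cancel between the two products, leaving
\[
B_0 B_1 - \bar A_0 A_1 = -2i\epsilon\bigl(A_0(a)A_2(a) - A_1(a)^2\bigr) + O(\epsilon^2) = -2i\epsilon E_1(a)^2 + O(\epsilon^2).
\]
Meanwhile $B_1 D_0 = 2|\epsilon|A_1(a)E_1(a)+O(\epsilon^2)$ and the denominator is $D_0(B_0+D_0)=2|\epsilon|E_1(a)B_0(a)+O(\epsilon^2)$.

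Dividing, the factor $2|\epsilon|E_1(a)$ cancels and the limit becomes
\[
F(z) \;\longrightarrow\; \frac{A_1(a)}{B_0(a)} - i\,\frac{\epsilon}{|\epsilon|}\,\frac{E_1(a)}{B_0(a)} \;=\; \frac{B_1(a) \mp i\,E_1(a)}{B_0(a)},
\]
with the upper (resp.\ lower) sign for $\epsilon>0$ (resp.\ $\epsilon<0$), yielding the two claimed one-sided limits. Continuity of $F$ away from the real axis (Lemma \ref{lemma1}) ensures the limit is independent of the path of approach within each half-plane, so the vertical approach suffices. The main obstacle is purely bookkeeping: making sure the $\sum f_j f_j''$ terms cancel between $B_0B_1$ and $\bar A_0 A_1$, which is what produces the clean appearance of $E_1(a)^2 = A_0(a)A_2(a)-A_1(a)^2$ in both numerator and the leading order of $D_0$.
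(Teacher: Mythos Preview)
Your proof is correct and follows essentially the same approach as the paper: both resolve the $0/0$ indeterminate form in \eqref{19} by Taylor-expanding near $z=a$ to extract the leading behavior of $B_0B_1-\bar A_0A_1$ (order $\epsilon$) and $D_0^2$ (order $\epsilon^2$), obtaining the common factor $A_0(a)A_2(a)-A_1(a)^2=E_1(a)^2$. The only cosmetic difference is organizational: the paper expands the combined expressions $B_0B_1-\bar A_0A_1$ and $D_0^2$ directly as double sums in $j,k$ (so the second-derivative terms never appear), whereas you expand each $A_k,B_k$ separately, introduce $S(a)=\sum_j f_j(a)f_j''(a)$, and verify its cancellation---and you restrict to the vertical approach $z=a+i\epsilon$, appealing to continuity of $F$ off the real axis, while the paper writes the expansion for general $z\to a$.
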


    \begin{proof}
	Consider a point $a$ on the real axis.  On the reals, $A_k = B_k$,
	for $k=0,1$, and so $D_0 = 0$.  Hence, the right-hand side in
	\eqref{19} is an indeterminate form.  To analyze the limiting behavior
	of $F$ near the real axis, we first divide the numerator and
	denominator by $D_0$:
	\begin{equation} \label{31}
		F = \frac{
		      B_1 + \dfrac{B_0 B_1 - \bar{A}_0 A_1}{D_0}
		    }{
		      B_0 + D_0
		    } .
	\end{equation}
	Now, only the ratio in the numerator is indeterminate.
	To study it, we start by expressing things in terms of the $f_j$
	functions:
	\begin{eqnarray*}
	    B_0 B_1 - \bar{A}_0 A_1 
	    & = &
	    \sum_{j,k} \overline{f_j(z)} f_k'(z) 
	    \left( f_j(z) \overline{f_k(z)} - \overline{f_j(z)} f_k(z) \right) \label{100} \\
	    & = &
	    2 i \sum_{j,k} \overline{f_j(z)} f_k'(z) 
	    \Im\left( f_j(z) \overline{f_k(z)} \right) \nonumber 
	\end{eqnarray*}
	and
	\begin{eqnarray*}
	    D_0^2 \;\; = \;\; B_0^2 - |A_0|^2
	    & = &
	    \sum_{j,k} f_j(z) \overline{f_k(z)}
	    \left( \overline{f_j(z)} f_k(z) - f_j(z) \overline{f_k(z)} \right) \label{101} \\
	    & = &
	    -2 i \sum_{j,k} f_j(z) \overline{f_k(z)}
	    \Im\left( f_j(z) \overline{f_k(z)} \right). \nonumber 
	\end{eqnarray*}
	Next, we write the first few terms of the Taylor series expansion of the $f_j$'s
	about the point $z=a$,
	substitute the expansions into the formulas above and then drop ``high'' order
	terms to derive the first few terms of the Taylor expansions for
	$B_0 B_1 - \bar{A}_0 A_1$ and $B_0^2 - |A_0|^2$.
	For the first expression, we only need to go to linear terms to get
	\begin{eqnarray*}
	    B_0 B_1 - \bar{A}_0 A_1 
	    & = &
	    2 i \sum_{j,k} f_j(a) f_k'(a) 
	        \left( f_j'(a) f_k(a) - f_j(a) f_k'(a) \right) \; y \; 
		+ \; o(z-a) \\
	    & = &
	    2 i \left( A_1(a)^2  - A_0(a) A_2(a) \right) y \; + \; o(z-a) 
        \end{eqnarray*}
	(as usual, we use $y$ to denote the imaginary part of $z$).
	For the second expression, we need to go to quadratic terms.  The
	result is
	\begin{eqnarray*}
	    D_0^2 
	    & = &
	    4 \sum_{j,k} \left( 
			      f_j'(a)^2 f_k(a)^2 - f_j(a) f_j'(a) f_k(a) f_k'(a)
			    \right) y^2 \; + \; o((z-a)^2) \\
	    & = &
	    4 \left( 
		   \left(\sum_{j=0}^n f_j'(a)^2 \right) 
		   \left(\sum_{j=0}^n f_j(a)^2 \right) -
		   \left(\sum_{j=0}^n f_j'(a) f_j(a) \right)^2 
	      \right) y^2 \; + \; o((z-a)^2) \\[2ex]
	    & = &
	    4 \left( A_2(a) A_0(a) - A_1(a)^2 \right) y^2 \; + \; o((z-a)^2) .
	\end{eqnarray*}
	Hence, we see that
	\begin{equation}
	    \dfrac{B_0 B_1 - \bar{A}_0 A_1}{D_0}
	    = 
	    -i \; E_1(a) \; \sgn(a) \; \sgn(y)  \; + \; o(z-a) .
	    \label{32}
	\end{equation}
	Combining \eqref{31} and \eqref{32}, we get the desired limits expressing
	the jump discontinuity on the real axis.
    \end{proof}

    \subsubsection*{Proof of Theorem \ref{thm1}.}
	Without loss of generality, it suffices to consider regions $\Omega$
	that are either regions that do not intersect the real axis or small
	rectangles centered on the real axis.
	We begin by considering a region $\Omega$ that does not intersect the 
	real axis.  Applying Stokes' theorem to the expression for 
	$\Exp \nu_n (\Omega)$ given in Proposition \ref{prop1}, we see that
	\[
		\Exp \nu_n(\Omega) = \frac{1}{\pi} \int_{\Omega}
				\frac{\partial}{\partial \bar{z}}
				F(z, \bar{z})
				dx dy .
	\]
	Note that we are now writing $F(z, \bar{z})$ to emphasize the fact
	that $F$ depends on both $z$ and $\bar{z}$.
	Letting the dagger symbol stand for the derivative with respect to
	$\bar{z}$, we see from Lemma \ref{lemma1} that
	\begin{eqnarray} \label{200} 
		~ \\
		\frac{\partial F}{\partial \bar{z}}
		& = &
		\left\{
		    (B_0 D_0 + B_0^2 - \bar{A}_0 A_0)
		    (B_1^{\dagger} D_0 + B_1 D_0^{\dagger} +B_0^{\dagger} B_1
		     + B_0 B_1^{\dagger} - \bar{A}_0^{\dagger} A_1)
		\right.  \nonumber \\
		& &
		\left.
		    - (B_1 D_0 + B_0 B_1 - \bar{A}_0 A_1)
		    (B_0^{\dagger} D_0 + B_0 D_0^{\dagger} + 2 B_0
		     B_0^{\dagger} - \bar{A}_0^{\dagger} A_0)
		\right\} \nonumber \\
		& &
		/ (B_0 D_0 + B_0^2 - |A_0|^2)^2 .  \nonumber
	\end{eqnarray}
	Recall that we have assumed that the functions $f_j$ are entire and
	are real-valued on the real line.   Hence, they have the property that
	$\overline{f_j(z)} = f_j(\bar{z})$.  Their derivatives also have this
	property.  Exploiting these facts,
	it is easy to check that
	\begin{equation} \label{201}
		B_0^{\dagger} = \bar{B}_1, \quad
		\bar{A}_0^{\dagger} = 2 \bar{A}_1, \quad
		B_1^{\dagger} = B_2.
	\end{equation}
	Recalling that $D_0 = \sqrt{B_0^2 - |A_0|^2}$, we get that
	\begin{equation} \label{202}
		D_0^{\dagger} = \frac{B_0 \bar{B}_1 - A_0 \bar{A}_1}{D_0} .
	\end{equation}
	As explained in \cite{Van94d},
	substituting these formulas for the derivatives into 
	the expression given above for $\partial F/ \partial \bar{z}$
	followed by
	careful algebraic simplifications (see the appendix for the details)
	eventually leads to the fact that
	$(1/\pi) \partial F(z,\bar{z}) / \partial \bar{z}$ equals
	the expression given for $h_n$ in the statement of the theorem.
     \qed

    \subsubsection*{Proof of Theorem \ref{thm2}.}
	Consider a narrow rectangle that straddles an interval of the
	real axis:
	$\Omega = [a,b]\times[-\varepsilon,\varepsilon]$ where $a < b$ and
	$\varepsilon > 0$.
	Writing the contour integral for $\Exp \nu_n(\Omega)$ given by
	Proposition \ref{prop1} and letting $\varepsilon$ tend to $0$, we see that
	\[
	    \Exp \nu_n ( (a,b) )
	    =
	    \frac{1}{2 \pi i} \int_{a}^{b} \left( F(x-)-F(x+) \right) dx ,
	\]
	where $\nu_n ( (a,b) )$ denotes the number of zeros in the
	interval $(a,b)$ of the real axis and
	\[
	    F(x-) \; = \; \lim_{z \rightarrow x: \Im(z) < 0} F(z)
	    \quad
	    \text{ and }
	    \quad
	    F(x+) \; = \; \lim_{z \rightarrow x: \Im(z) > 0} F(z) .  
	\]
	From Lemma \ref{lemma1b}, we see that
	\[
	    g_n(x) 
	    = 
	    \frac{1}{2 \pi i} (F(x-)-F(x+))
	    = \frac{ E_1(x) }{\pi B_0} .
	\]
	This completes the proof.
     \qed

\section{Examples.} \label{sec5}

In this section, we consider some examples.
The simplest example corresponds to the $f_j$ simply being the power functions:
\[
    f_j(z) = z^j .
\]
As this case was studied carefully in \cite{Van94d},
other that showing a particular example ($n = 10$) in Figure \ref{fig1}, 
we refer the reader to that previous paper for more information about this
example.

    \begin{figure}[t]
    \begin{center}
    \includegraphics[width=5.0in]{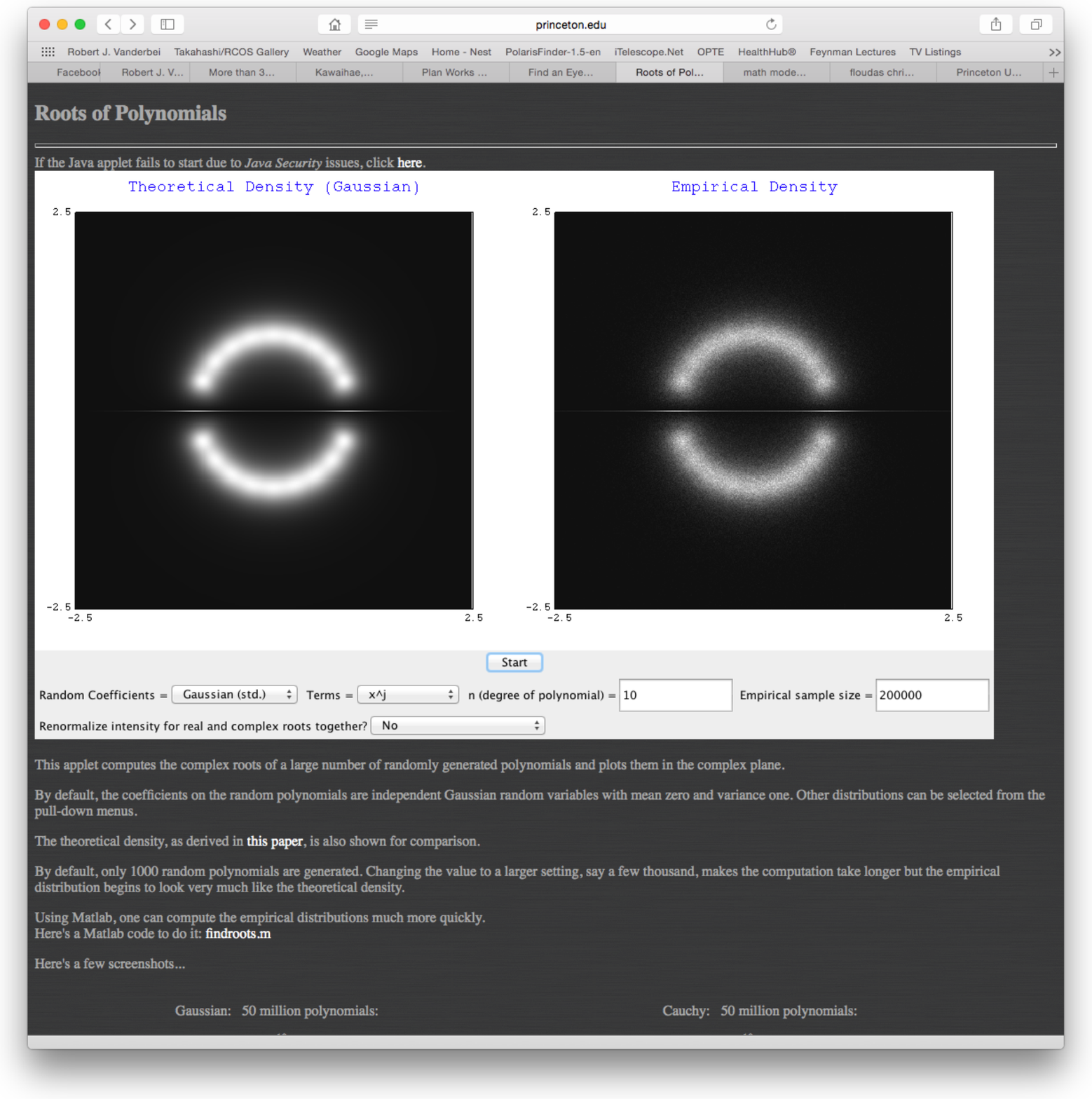}
    \end{center}
    \caption{Random degree $10$ polynomial: 
	    $\eta_0 + \eta_1 z + \eta_2 z^2 + \cdots + \eta_{10} z^{10}$.
    In this figure and the following ones,
    the left-hand plot is a grey-scale image of the intensity functions $h_n$
    and $g_n$ (the latter being concentrated on the x-axis).
    The right-hand plot shows $200{,}000$ roots from randomly
    generated polynomials.
    Note that, for the left-hand plots,
    the grey-scales for $h_n$ and $g_n$ are scaled
    separately and in such a way that
    both use the full range from white to black.
    } 
    \label{fig1}
    \end{figure}

    Each figure in this section shows two plots.  
    On the left is a grey-scale plot of the intensity
    functions $g_n$ and $h_n$.  On the right is a plot of hundreds of thousands of
    zeros obtained by
    generating random sums and explicitly finding their zeros.
    
    The intensity plots appearing on the left were produced by partitioning the 
    given square domain into a $440$ by $440$ grid of ``pixels'' 
    and computing the intensity function in the 
    center of each pixel.  The grey-scale was computed by assigning black 
    to the pixel with the smallest value and white to the pixel with the 
    largest value and then linearly interpolating all values in between.
    This grey-scale computation was performed separately for $h_n$ and for $g_n$
    (which appears only on the x-axis) and so no conclusions should be drawn
    comparing the intensity shown on the x-axis with that shown off from it.
    The applet used to produce these figures can be found at
    \begin{center}
        \href{http://www.princeton.edu/~rvdb/JAVA/Roots/Roots.html}{http://www.princeton.edu/$\sim$rvdb/JAVA/Roots/Roots.html}
    \end{center}
    
    Of course, the intensity function $g_n$ is one-dimensional and
    therefore it would be natural (and more informative) to make separate 
    plots of values of $g_n$ verses $x$, but such plots appear in many 
    places (see, e.g., \cite{Kac59}) and so it seemed unnecessary to produce 
    them here.  
    
\subsection{Weyl Polynomials}

Sums in which the $f_j$'s are given by 
\[
    f_j(z) = \frac{z^j}{\sqrt{j!}}
\]
are called {\em Weyl polynomials} (also sometimes called {\em flat polynomials}).
Figure \ref{fig2} shows the empirical distribution for the case where $n=10$.
For this case, the limiting forms of the various functions defining the
densities are easy to compute:
\[
    \begin{array}{rclrclrcl}
        \ds \lim_{n \rightarrow \infty} A_0(z) & = & e^{z^2} & \qquad
        \ds \lim_{n \rightarrow \infty} B_0(z) & = & e^{|z|^2} \\[1ex]
        \ds \lim_{n \rightarrow \infty} A_1(z) & = & z e^{z^2} & \qquad
        \ds \lim_{n \rightarrow \infty} B_1(z) & = & \zbar e^{|z|^2} \\[1ex]
        \ds \lim_{n \rightarrow \infty} A_2(z) & = & (z^2 + 1) e^{z^2} & \qquad
        \ds \lim_{n \rightarrow \infty} B_2(z) & = & (|z|^2 + 1) e^{|z|^2} \\[3ex]
        \ds \lim_{n \rightarrow \infty} D_0(z) & = & \sqrt{e^{2(x^2+y^2)} - e^{4(x^2-y^2)}} & \qquad
        \ds \lim_{n \rightarrow \infty} E_1(z) & = & e^{z^2} .
    \end{array}
\]
The random Weyl polynomials are interesting because in the limit as $n
\rightarrow \infty$, the distribution of the real roots becomes uniform over the
real line:
\begin{thm} If $f_j(z) = z^j/\sqrt{j!}$ for all $j$, then
\[
    \lim_{n \rightarrow \infty} g_n(x) = \frac{1}{\pi}.
\]
\end{thm}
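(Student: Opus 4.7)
The plan is to apply Theorem \ref{thm2} directly: it tells us that on the real line $g_n(x) = E_1(x)/(\pi B_0(x))$, so the claimed limit reduces to showing that the ratio $E_1(x)/B_0(x)$ tends to $1$ as $n \to \infty$. Since the tabulated limits just before the theorem statement already give $B_0(z) \to e^{|z|^2}$ and $E_1(z) \to e^{z^2}$, on the real axis both limits equal $e^{x^2}$, so the ratio is $1$ and the theorem follows.

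To justify those two limits (the only nontrivial ingredient), I would expand each of the six basic sums explicitly in terms of the truncated exponential $S_n(t) := \sum_{j=0}^n t^j/j!$. First I would compute, for $f_j(z) = z^j/\sqrt{j!}$,
\[
A_0(x) = S_n(x^2), \qquad A_1(x) = x\, S_{n-1}(x^2), \qquad A_2(x) = S_{n-1}(x^2) + x^2\, S_{n-2}(x^2),
\]
and similarly $B_0(x) = S_n(x^2)$ on the real axis. Then I would substitute into $A_2 A_0 - A_1^2$ and let $n \to \infty$, using $S_k(x^2) \to e^{x^2}$ for each $k \in \{n,n-1,n-2\}$, to obtain
\[
A_2(x) A_0(x) - A_1(x)^2 \longrightarrow (1+x^2) e^{2x^2} - x^2 e^{2x^2} = e^{2x^2},
\]
so that $E_1(x) \to e^{x^2}$. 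Dividing by $B_0(x) \to e^{x^2}$ and by $\pi$ gives $g_n(x) \to 1/\pi$.

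There is essentially no hard step here: pointwise convergence of a truncated exponential series to its limit is immediate from the convergence of the Maclaurin series of $e^t$ at every real $t$, and the cancellation $(1+x^2) - x^2 = 1$ in the square root is the one algebraic fact doing the real work. The only mild subtlety worth flagging in the write-up is that although $A_1$ and $A_2$ are sums with a slightly shifted index range, the relevant tails of the exponential series still vanish, so the limits hold for every fixed $x \in \reals$. No uniformity in $x$ is needed, since the theorem statement asks only for pointwise convergence of $g_n$.
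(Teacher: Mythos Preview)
Your proof is correct and follows essentially the same route as the paper: invoke Theorem~\ref{thm2}, plug in the limiting values of $E_1$ and $B_0$ on the real axis, and read off $1/\pi$. In fact you supply more than the paper does, since the paper simply tabulates the limits $A_0\to e^{z^2}$, $B_0\to e^{|z|^2}$, $E_1\to e^{z^2}$, etc., and declares the result trivial, whereas you actually verify them via the truncated-exponential identities $A_0(x)=S_n(x^2)$, $A_1(x)=xS_{n-1}(x^2)$, $A_2(x)=S_{n-1}(x^2)+x^2S_{n-2}(x^2)$ and the cancellation $(1+x^2)-x^2=1$.
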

\begin{proof}
Follows trivially from Theorem \ref{thm2} and the formulas above.
\end{proof}

    \begin{figure}[t]
    \begin{center}
    \includegraphics[width=5.0in]{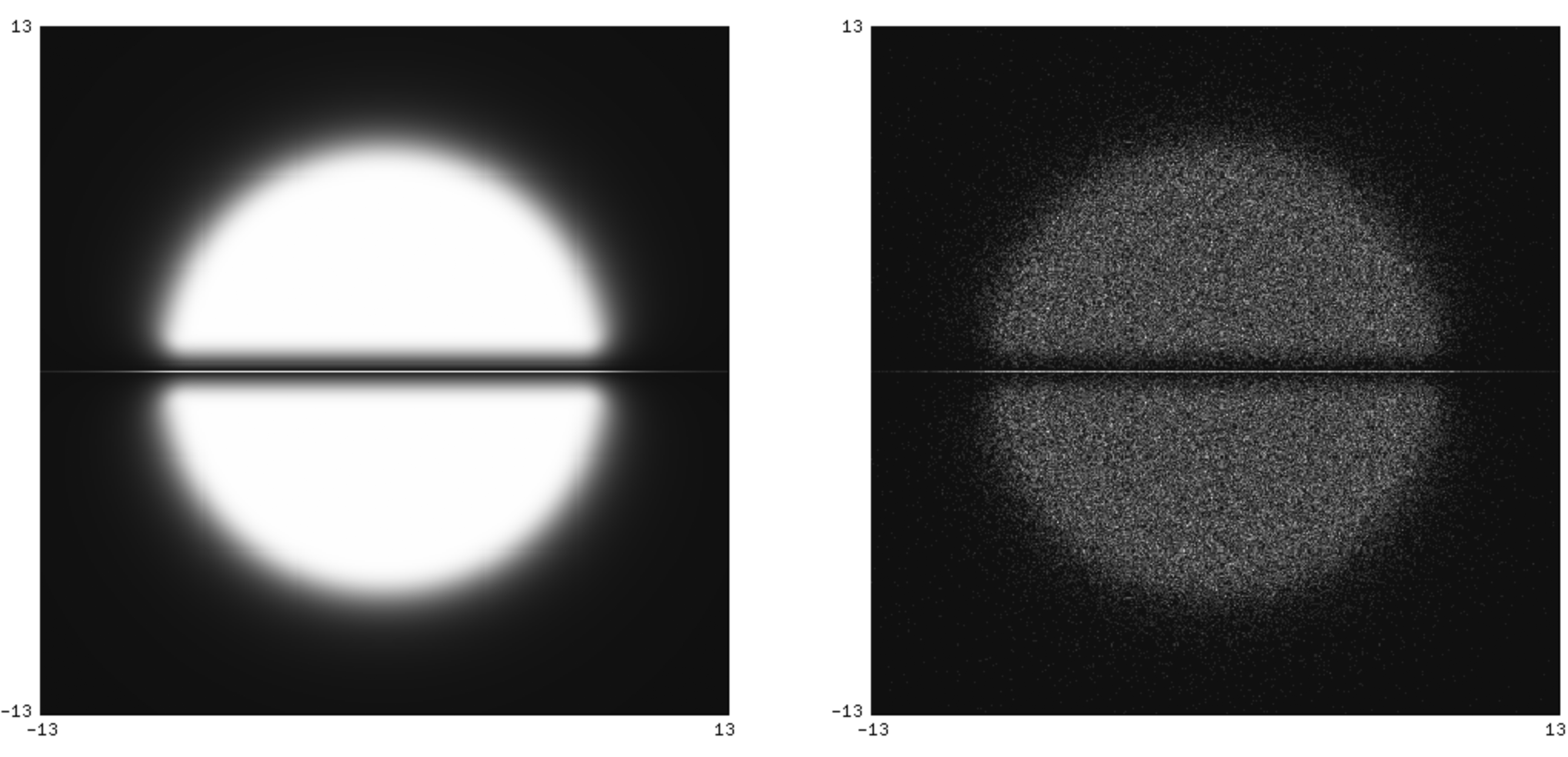}
    \end{center}
    \caption{Random degree $10$ Weyl polynomials:
	    $\eta_0 + \eta_1 z + \eta_2 \frac{z^2}{\sqrt{2!}} 
	    + \cdots + \eta_{80} \frac{z^{80}}{\sqrt{80!}}$.
	    The empirical distribution on the right was generated using
	    $2{,}500$ random sums.
    }
    \label{fig3}
    \end{figure}

It is interesting to note that, in addition to the asymptotic uniformity of the
distribution of the real roots, the complex roots are also much more uniformly
distributed than was the case when we did not have the $1/\sqrt{j!}$ factors.

Figure \ref{fig10} shows plots of $g_n$ for all of the examples considered here. 
    \begin{figure}[t]
    \begin{center}
    \includegraphics[width=5.0in]{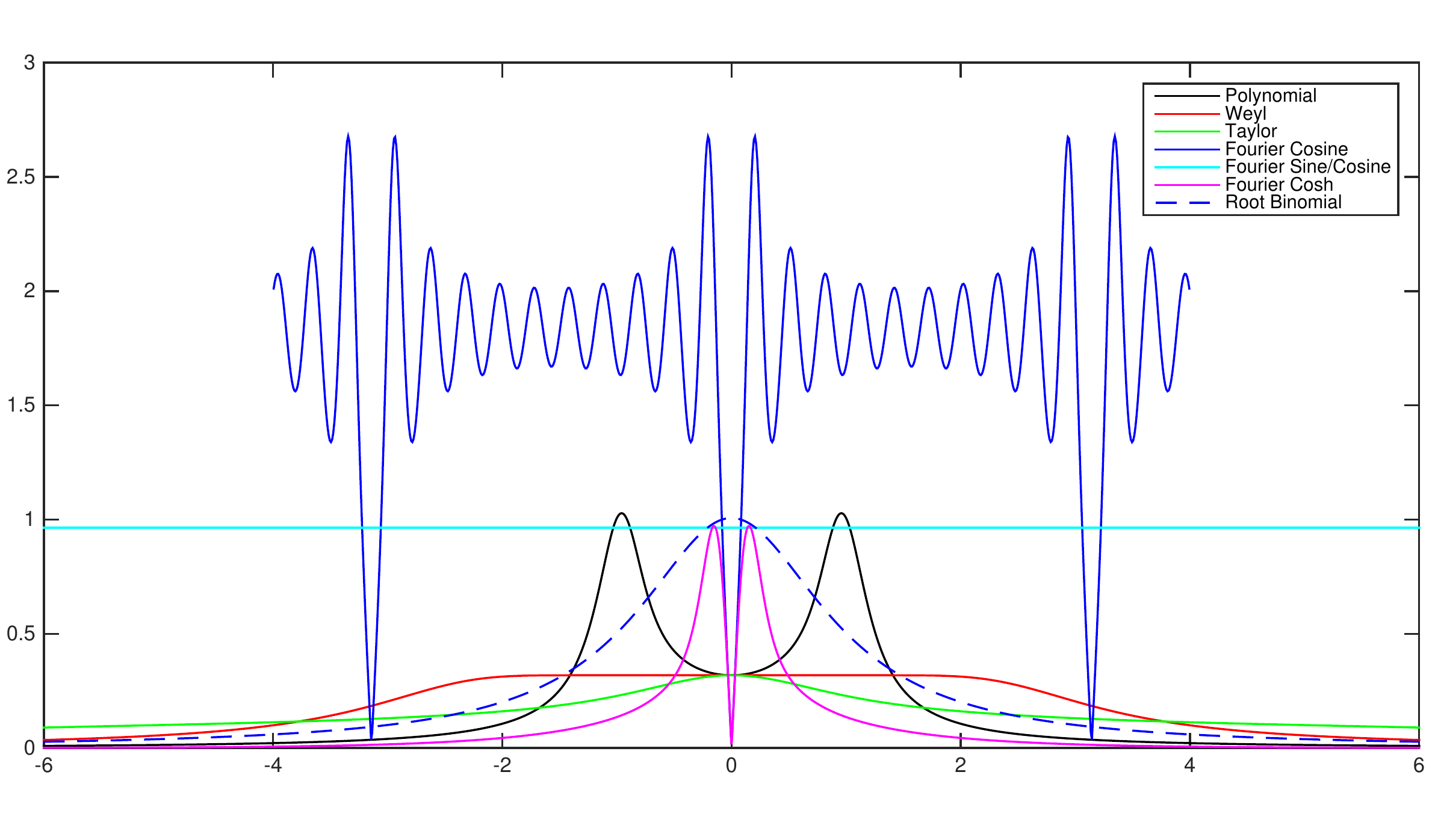}
    \end{center}
    \caption{The function $g_n$ for $n=10$ for several choices of the $f_j$'s
    }
    \label{fig10}
    \end{figure}

\subsection{Taylor Polynomials}

Another obvious set of polynomials to consider are the random Taylor polynomials; i.e.,
those polynomials with 
\[
    f_j(z) = \frac{z^j}{j!} .
\]
Figure \ref{fig3} shows the $n=10$ empirical distribution for these polynomials.

    \begin{figure}[t]
    \begin{center}
    \includegraphics[width=5.0in]{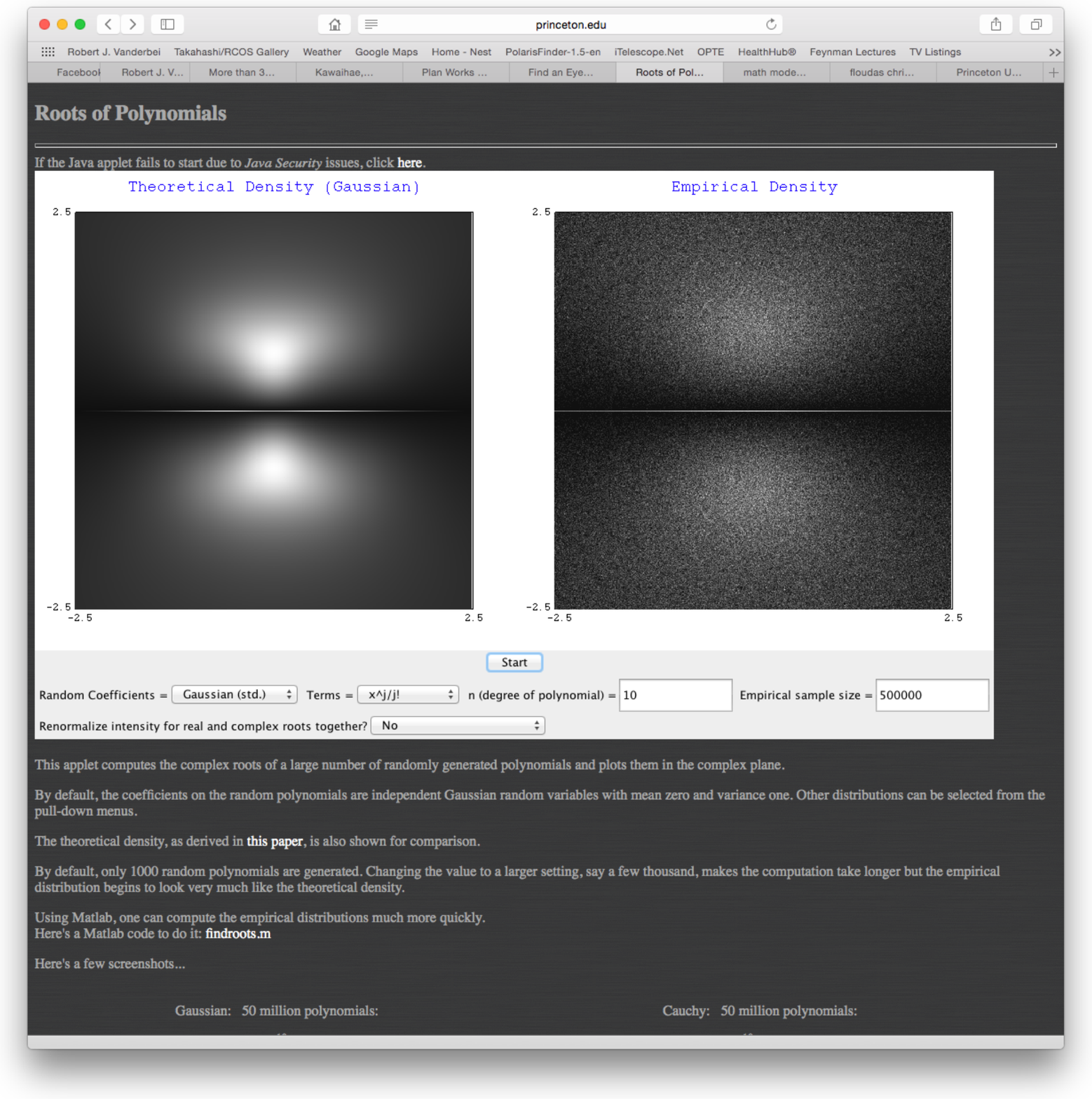}
    \end{center}
    \caption{Random degree $10$ Taylor polynomials:  
	    $\eta_0 + \eta_1 z + \eta_2 \frac{z^2}{2!} 
	    + \cdots + \eta_{10} \frac{z^{10}}{10!}$.
	    The empirical distribution on the right was generated using
	    $500{,}000$ random sums.
    } \label{fig2}
    \end{figure}

\subsection{Root-Binomial Polynomials}

Let 
\[
    f_j(z) = \sqrt{{n}\choose{j}} \; z^j .
\]
Figure \ref{fig22} shows the $n=10$ empirical distribution for these polynomials.
This example is interesting because the real and complex density functions take on a rather
simple explicit form.
Indeed, it is easy to check that
\[
    \begin{array}{rclrclrcl}
	A_0 (z) &=& (1+z^2)^n ,
	& \quad
	B_0 (z) &=& (1+|z|^2)^n ,
	\\[3ex]
	A_1 (z) &=& n z (1+z^2)^{n-1} ,
	& \quad
	B_1 (z) &=& n \zbar (1+|z|^2)^{n-1} ,
	\\[3ex]
	A_2 (z) &=& n (1+nz^2) (1+z^2)^{n-2} ,
	& \quad
	B_2 (z) &=& n (1+n|z|^2) (1+|z|^2)^{n-2} .
    \end{array}
\]
The formula for the density on the real axis simplifies nicely:
\[
    g(x) = \frac{\sqrt{n}}{\pi} \frac{1}{1+x^2} .
\]
From this formula, we see that the expected number of real roots is $\sqrt{n}$
and that each real root has a Cauchy distribution.

    \begin{figure}[t]
    \begin{center}
    \includegraphics[width=5.0in]{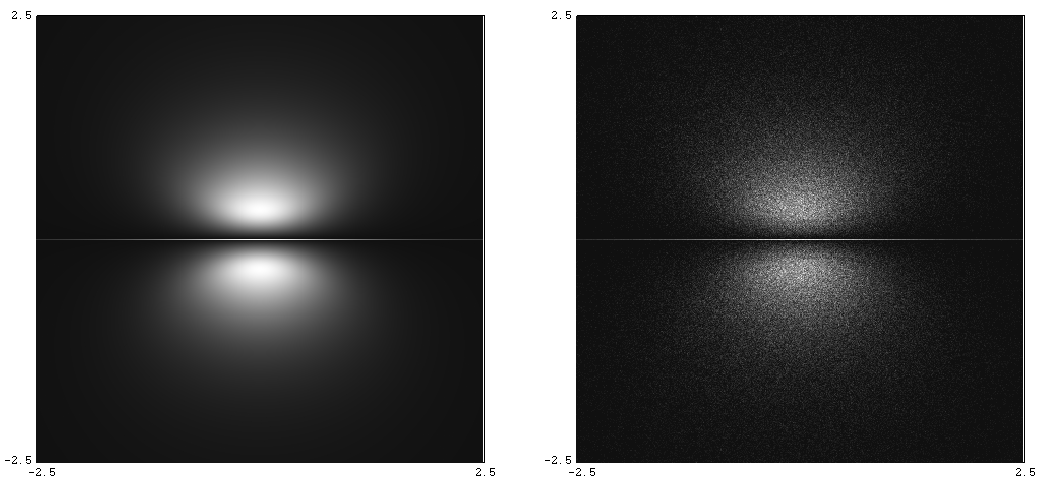}
    \end{center}
    \caption{Random degree $10$ root-binomial polynomials:  
	    $\eta_0 
	    + \eta_1 \sqrt{\frac{10}{1}} z 
	    + \eta_2 \sqrt{\frac{10 \cdot 9}{2 \cdot 1}} z^2
	    + \eta_3 \sqrt{\frac{10 \cdot 9 \cdot 8}{3 \cdot 2 \cdot 1}} z^3
	    + \cdots 
	    + \eta_{10} z^{10}$.
	    The empirical distribution on the right was generated using
	    $50{,}000$ random sums.
    } \label{fig22}
    \end{figure}

\subsection{Fourier Cosine Series}

Now let's consider a family of random sums that are not polynomials, namely, 
random (truncated) Fourier cosine series:
\[
    f_j(z) = \cos(j z).
\]
This case is interesting because these functions are real-valued not only on the
real axis but on the imaginary axis as well: $\cos(iy) = \cosh(y)$.  
Hence, $D_0$ vanishes on both the
real and the imaginary axes and, therefore, both axes have a density of zeros.
The set of imaginary roots for a particular sum using the $f_j$'s map to a set
of real roots if $f_j(z)$ is replaced with $\tilde{f}_j(z) = f_j(iz) =
\cosh(z)$.  Hence, the formula for the density on the imaginary axis is easy 
to compute by this simple rotation.
The resulting density on the imaginary axis has this simple form:
\[
    g_n(y) = \frac{E_1(iy)}{\pi B_0(iy)} .
\]
An example with $n=10$ is shown in Figure \ref{fig4}.

    \begin{figure}[t]
    \begin{center}
    \includegraphics[width=5.0in]{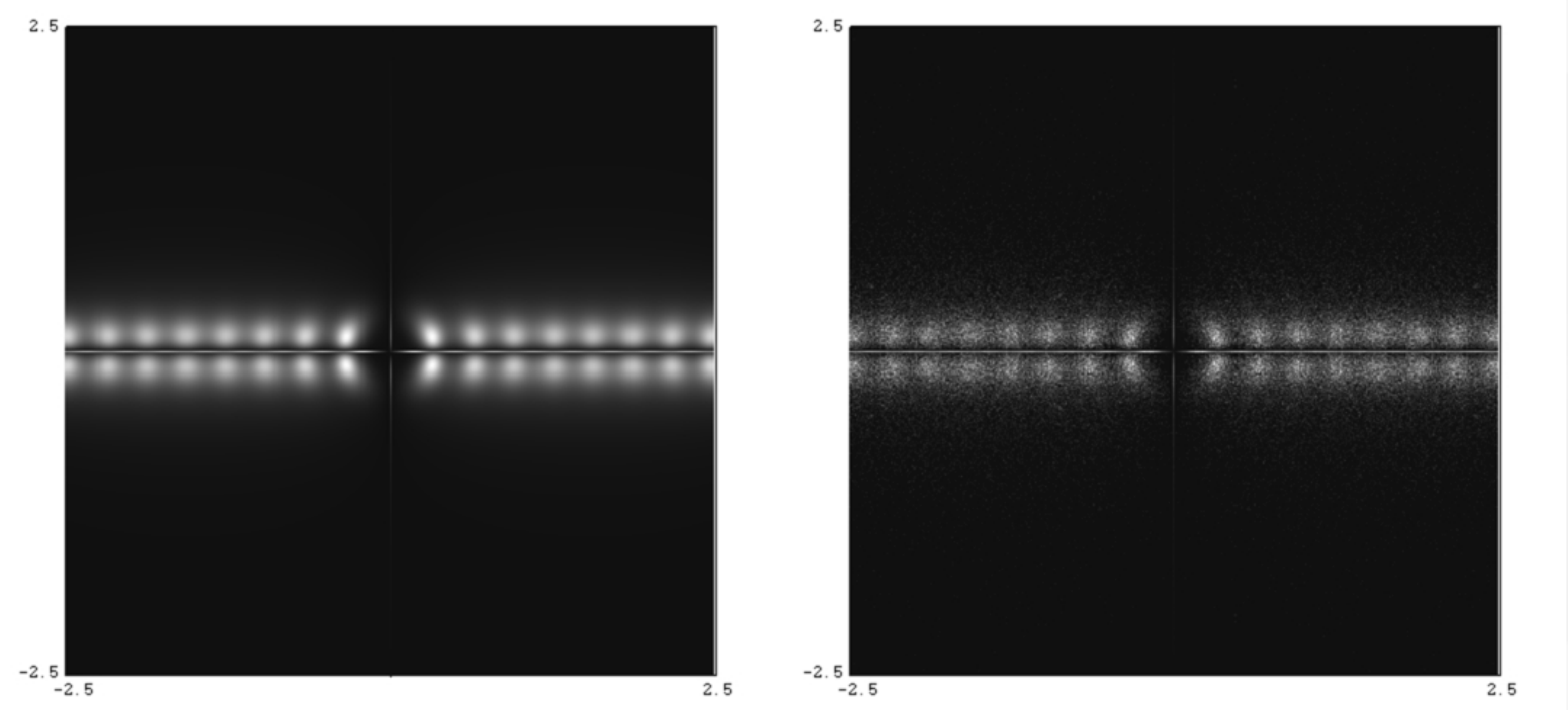}
    \end{center}
    \caption{Random sum of first $11$ terms in a cosine Fourier series:
	    $\eta_0 + \eta_1 \cos(z) + \eta_2 \cos(2z)
	    + \cdots + \eta_{10} \cos(10 z)$.
	    The empirical distribution on the right was generated using
	    $24{,}000$ random sums.
    }
    \label{fig4}
    \end{figure}

\subsection{Fourier Sine/Cosine Series}

Finally, we consider random (truncated) Fourier sine/cosine series:
\[
    f_j(z) = 
    \left\{
    \begin{array}{ll}
        \cos(\frac{j}{2} z), & \qquad \text{$j$ even}, \\[1ex]
        \sin(\frac{j+1}{2} z), & \qquad \text{$j$ odd}. 
    \end{array}
    \right.
\]
    \begin{figure}[t]
    \begin{center}
    \includegraphics[width=5.0in]{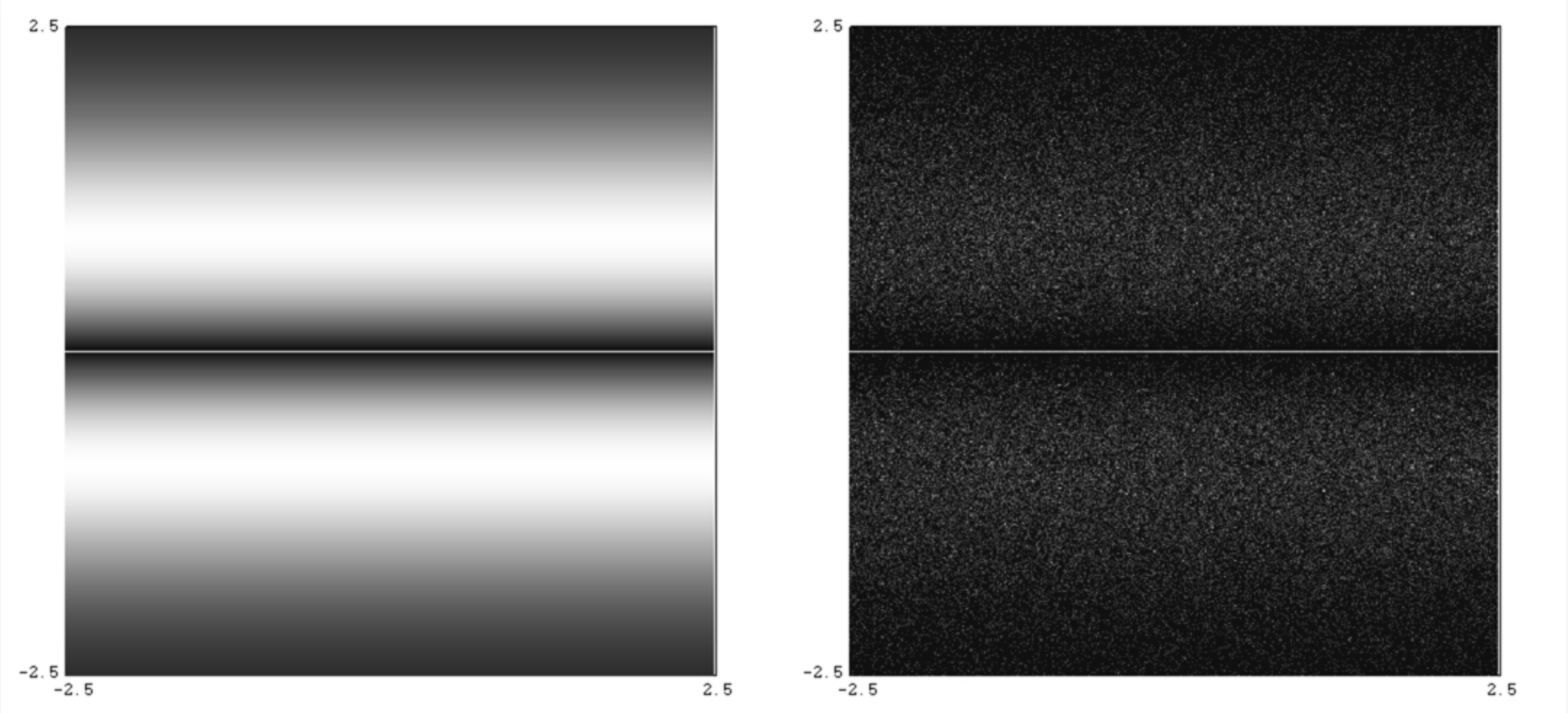}
    \end{center}
    \caption{Random sum of first $3$ terms in a sine/cosine Fourier series:
	    $\eta_0 + \eta_1 \sin(z) + \eta_2 \cos(z)$.
	    The empirical distribution on the right was generated using
	    $210{,}000$ random sums.
    }
    \label{fig5}
    \end{figure}
    \begin{figure}[t]
    \begin{center}
    \includegraphics[width=5.0in]{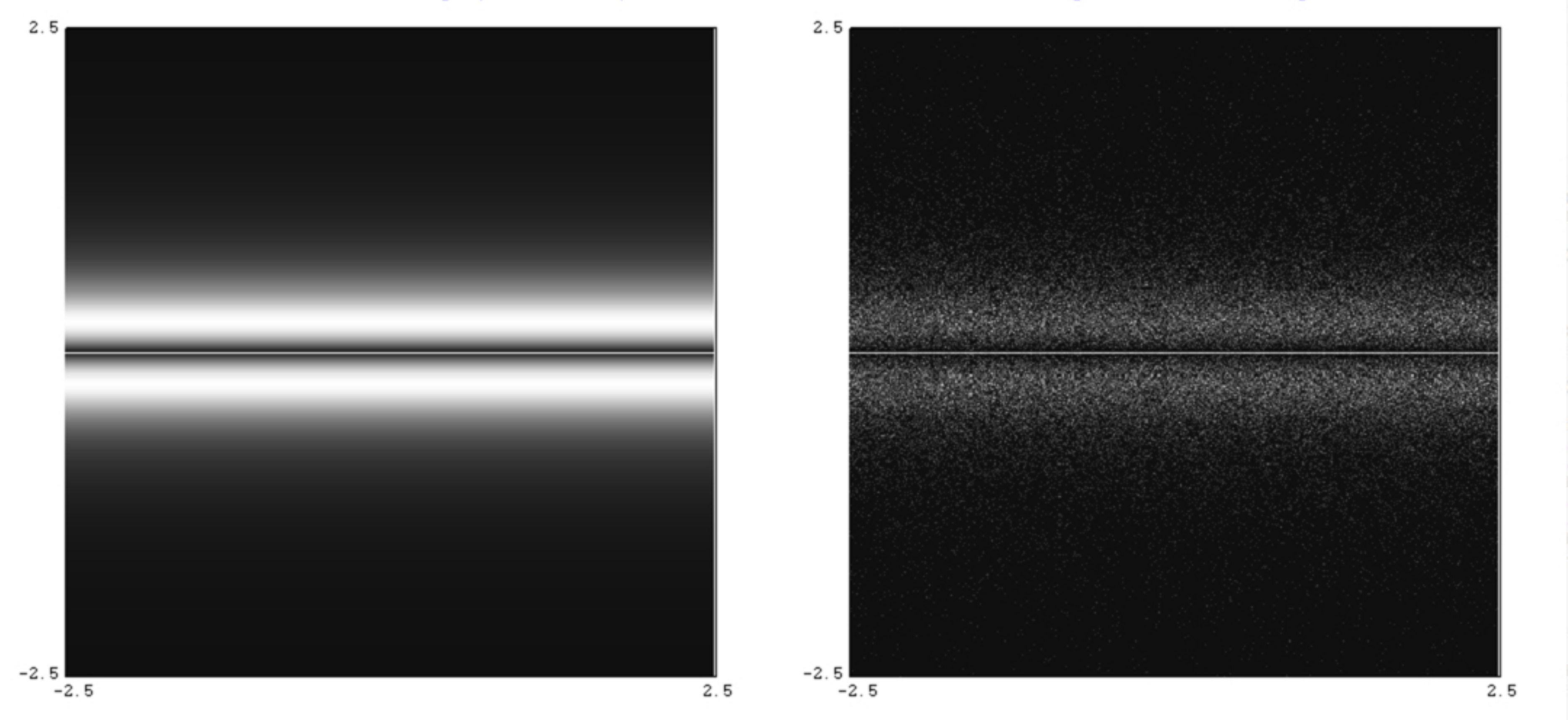}
    \end{center}
    \caption{Random sum of first $11$ terms in a sine/cosine Fourier series:
	    $\eta_0 + \eta_1 \sin(z) + \eta_2 \cos(z)
	    + \cdots + \eta_{9} \sin(5 z) + \eta_{10} \cos(5 z)$.
	    The empirical distribution on the right was generated using
	    $26{,}000$ random sums.
    }
    \label{fig6}
    \end{figure}
The $n=2$ case is shown in Figure \ref{fig5} and the $n=10$ case is shown in
Figure \ref{fig6}.
For this example, it is easy to compute the key functions.  Assuming that $n$
is even and letting $m = n/2$, we get
\[
    \begin{array}{rclrcl}
        A_0(z) & = & m+1 & \quad
        B_0(z) & = & m+1 + 2 \ds\sum_{j=1}^m \sinh^2(jy) \\[1ex]
        A_1(z) & = & 0 & \quad
        B_1(z) & = & -2 i \ds\sum_{j=1}^m j \cosh(jy) \sinh(jy) \\[1ex]
        A_2(z) & = & m (m+1) (2m+1) / 6 & \quad
        B_2(z) & = & 2 \ds\sum_{j=1}^m j^2 \sinh^2(jy) 
    \end{array}
\]
From these explicit formulas, it is easy to check that the density function
$h_n(z)$ depends only on the imaginary part of $z$ as in evident in Figures
\ref{fig5} and \ref{fig6}.
It is also easy to check that the distribution on the real axis is uniform;
i.e., the density function $g_n(x)$ is a constant:
\[
    g_n(x) = \frac{1}{2\pi} \sqrt{n(n+1)/3} .
\]

\section{Final Comments and Suggested Future Research.} \label{sec6}

    The machinery developed in this paper can be applied in many situations
    that we have not covered.  For example, if the coefficients are assumed to
    be independent complex Gaussains (instead of real), then we can apply the
    same methods and we expect that the computations will be simpler.  In this case, the
    intensity function does not have mass concentrated on the real axis (i.e.,
    $g_n = 0$) and the intensity function is rotationally invariant.  

\section*{Acknowledgement}
The author would like to thank John P. D'Angelo for helpful comments.

\bibliographystyle{imsart-ims/imsart-number}
\bibliography{../lib/refs,rootrefs}

\appendix
\section{Algebraic Simplification of the Formula for 
	$\partial F / \partial \bar{z}$.}

Substituting the derivatives given in \eqref{201} and \eqref{202} into the
formula \eqref{200} for $\partial F / \partial \bar{z}$, we get that the
denominator simplifies to
\begin{eqnarray*}
	\mbox{denom} \left(\frac{\partial F}{\partial \bar{z}}\right)
	& = & \left( B_0 D_0 + B_0^2 - \bar{A}_0 A_0 \right)^2 \\
	& = & \left( B_0 D_0 + D_0^2 \right)^2 \\[1ex]
	& = & (B_0 + D_0)^2 D_0^2 .
\end{eqnarray*}
and the numerator of the formula becomes
\begin{eqnarray*} 
	\mbox{num} \left(\frac{\partial F}{\partial \bar{z}}\right)
	& = &
	    (B_0 D_0 + B_0^2 - \bar{A}_0 A_0)
	    \Bigg(
		B_2 D_0 + B_1 
	       \frac{B_0 \bar{B}_1 - A_0 \bar{A}_1}{D_0}
	       \\ && \qquad \qquad \qquad \qquad
	       + \bar{B}_1 B_1
	       + B_0 B_2 - 2 \bar{A}_1 A_1
	     \Bigg)
	\\
	& &
	    - 
	    (B_1 D_0 + B_0 B_1 - \bar{A}_0 A_1)
	    \Bigg(
		\bar{B}_1 D_0 + B_0 
	       \frac{B_0 \bar{B}_1 - A_0 \bar{A}_1}{D_0}
	       \\ && \qquad \qquad \qquad \qquad
	       + 2 B_0 \bar{B}_1 - 2 \bar{A}_1 A_0
	     \Bigg) .
\end{eqnarray*}
The first step to simplifying the numerator is to replace $B_0^2 - \bar{A}_0
A_0$ in the first term with $D_0$ (like we did in the denominator) and factor
out a $1/D_0$ to get
\begin{eqnarray*} 
	\mbox{num} \left(\frac{\partial F}{\partial \bar{z}}\right)
	& = &
	    \frac{1}{D_0} 
	    (B_0 + D_0) D_0
	    \Big(
		B_2 D_0^2 + B_1 
	             B_0 \bar{B}_1 - B_1 A_0 \bar{A}_1
	       \\ && \qquad \qquad \qquad \qquad
	       + \bar{B}_1 B_1 D_0
	       + B_0 B_2 D_0 - 2 \bar{A}_1 A_1 D_0
	     \Big)
	\\
	& &
	    - \frac{1}{D_0} 
	    ((B_0 + D_0) B_1 - \bar{A}_0 A_1)
	    \Big(
		\bar{B}_1 D_0^2 + B_0 
	             B_0 \bar{B}_1 - B_1 A_0 \bar{A}_1
	       \\ && \qquad \qquad \qquad \qquad
	       + 2 B_0 \bar{B}_1 D_0 - 2 \bar{A}_1 A_0 D_0
	     \Big) .
\end{eqnarray*}
Next, we bundle together the terms that have a $B_0 + D_0$ factor:
\begin{eqnarray*} 
	\mbox{num} \left(\frac{\partial F}{\partial \bar{z}}\right)
	& = &
	    \frac{1}{D_0} 
	    (B_0 + D_0) 
	    \Big(
		B_2 D_0^3 + B_1 B_0 \bar{B}_1 D_0 - B_1 A_0 \bar{A}_1 D_0
	       \\ && \qquad \qquad \qquad \qquad
	       + \bar{B}_1 B_1 D_0^2
	       + B_0 B_2 D_0^2 - 2 \bar{A}_1 A_1 D_0^2
	\\[1ex]
	& &
	    \qquad \qquad \qquad
		- |B_1|^2 D_0^2 - B_0 B_0 |B_1|^2 + B_1^2 A_0 \bar{A}_1
	       \\[1ex] && \qquad \qquad \qquad \qquad
	        - 2 B_0 |B_1|^2 D_0 + 2 \bar{A}_1 A_0 B_1 D_0
	     \Big) \\
	& &
	    + \frac{1}{D_0} 
	    \bar{A}_0 A_1
	    \Big(
		\bar{B}_1 D_0^2 + B_0 
	             B_0 \bar{B}_1 - B_1 A_0 \bar{A}_1
	       \\ && \qquad \qquad \qquad \qquad
	       + 2 B_0 \bar{B}_1 D_0 - 2 \bar{A}_1 A_0 D_0
	     \Big) .
\end{eqnarray*}
Now, there are several places where we can find $B_0 + D_0$ factors.
For example, the big factor containing eleven terms can be rewritten as follows:
\begin{eqnarray*} 
 	&&
	B_2 D_0^3 + B_1 B_0 \bar{B}_1 D_0 - B_1 A_0 \bar{A}_1 D_0
       + \bar{B}_1 B_1 D_0^2
       + B_0 B_2 D_0^2 - 2 \bar{A}_1 A_1 D_0^2 
       \\[1ex]
 	&&
	- |B_1|^2 D_0^2 - B_0 B_0 |B_1|^2 + B_1^2 A_0 \bar{A}_1
	- 2 B_0 |B_1|^2 D_0 + 2 \bar{A}_1 A_0 B_1 D_0 
	\\[1ex]
 	&&
       \qquad 
       = \quad
       (B_0 + D_0) (B_2 D_0^2 - B_0 |B_1|^2 + A_0 \bar{A}_1 B_1) 
	- 2 |A_1|^2 D_0^2 .
\end{eqnarray*}
We also look for $B_0 + D_0$ factors in the five-term factor:
\begin{eqnarray*}
 	&&
	\bar{B}_1 D_0^2 + B_0 
	     B_0 \bar{B}_1 - B_1 A_0 \bar{A}_1
       + 2 B_0 \bar{B}_1 D_0 - 2 \bar{A}_1 A_0 D_0
       \\
 	&&
	\qquad = \;
	(B_0 + D_0) 
	\left( (B_0 + D_0) \bar{B}_1 - A_0 \bar{A}_1 \right) 
	- A_0 \bar{A}_1 D_0 .
\end{eqnarray*}
Substituting these expressions into the formula for the numerator, we get
\begin{eqnarray*} 
	D_0 \; \mbox{num} \left(\frac{\partial F}{\partial \bar{z}}\right)
	& = &
	    (B_0 + D_0) 
	    \left(
	       (B_0 + D_0) (B_2 D_0^2 - B_0 |B_1|^2 + A_0 \bar{A}_1 B_1) 
		- 2 |A_1|^2 D_0^2
	     \right) \\
	& &
	    + 
	    \bar{A}_0 A_1
	    \left(
		(B_0 + D_0) 
		\left( (B_0 + D_0) \bar{B}_1 - A_0 \bar{A}_1 \right) 
		- A_0 \bar{A}_1 D_0
	     \right) .
\end{eqnarray*}
Rearranging the terms, we see that
\begin{eqnarray*} 
	D_0 \; \mbox{num} \left(\frac{\partial F}{\partial \bar{z}}\right)
	& = &
	    (B_0 + D_0)^2 
	       (B_2 D_0^2 - B_0 |B_1|^2 + A_0 \bar{A}_1 B_1 + \bar{A}_0 A_1 \bar{B}_1) 
	\\
	& &
	    - (B_0 + D_0) \left( 2 |A_1|^2 D_0^2 + |A_0|^2 |A_1|^2 \right)
	\\[1ex]
	& &
	    - |A_0|^2 |A_1|^2 D_0 .
\end{eqnarray*}
Here's the tricky part... replace $D_0^2$ in the second row with 
$B_0^2 - |A_0|^2$ and the second and third row simplify nicely:
\begin{eqnarray*} 
	&&
        (B_0 + D_0) \left( 2 |A_1|^2 D_0^2 + |A_0|^2 |A_1|^2 \right)
        + |A_0|^2 |A_1|^2 D_0 
        \\
	&& \qquad \qquad =
        (B_0 + D_0) \left( 2 |A_1|^2 B_0^2 - |A_0|^2 |A_1|^2 \right)
        + |A_0|^2 |A_1|^2 D_0 
        \\
	&& \qquad \qquad =
        2 |A_1|^2 B_0^3 - |A_0|^2 |A_1|^2 B_0 
        + 2 |A_1|^2 B_0^2 D_0 
        \\
	&& \qquad \qquad =
        |A_1|^2 B_0 \left( 2 B_0^2 - |A_0|^2 + 2 B_0 D_0 \right)
        \\
	&& \qquad \qquad =
        |A_1|^2 B_0 \left( B_0 + D_0 \right)^2 .
\end{eqnarray*}
Substituting this expression into our formula for the numerator, we now have
\begin{eqnarray*} 
	D_0 \; \mbox{num} \left(\frac{\partial F}{\partial \bar{z}}\right)
	& = &
	    (B_0 + D_0)^2 
	    (B_2 D_0^2 - B_0 (|A_1|^2 + |B_1|^2) + A_0 \bar{A}_1 B_1 + \bar{A}_0 A_1 \bar{B}_1) .
\end{eqnarray*}
Finally, we get a simple formula for $\partial F / \partial \bar{z}$:
\[
    \frac{\partial F }{ \partial \bar{z} }
    =
    \frac{B_2 D_0^2 - B_0 (|A_1|^2 + |B_1|^2) + A_0 \bar{A}_1 B_1 + \bar{A}_0
		    A_1 \bar{B}_1}{D_0^3} .
\]

\end{document}